\newcommand{\pair}[1]{\langle #1\rangle}
\newcommand{\powerw}{\mathcal{P}(\omega)}
\newcommand{\cont}{\mathfrak{c}}
\newcommand{\filt}{\mathcal{F}}
\newcommand{\A}{\mathcal{A}}
\newcommand{\B}{\mathcal{B}}
\newcommand{\T}{\mathcal{T}}
\newcommand{\cl}[2][X]{\mathrm{cl}_{#1}\!\left(#2\right)}
\renewcommand{\b}{\mathfrak{b}}
\newcommand{\poset}{\mathcal{P}}
\newtheoremstyle{theorem}
     {11pt}
     {11pt}
     {}
     {}
     {\bfseries}
     {}
     {.5em}
     {\noindent\thmnumber{#2}. \thmname{#1}\thmnote{#3}}
\theoremstyle{theorem}
\newtheorem{thm}{Theorem}[section]
\newtheorem{lemma}[thm]{Lemma}
\newtheorem{defi}[thm]{Definition}
\newtheorem{coro}[thm]{Corollary}
\newtheorem{propo}[thm]{Proposition}
\newtheorem{ques}[thm]{Question}
\newtheorem{claim}[thm]{Claim}
\title{Pseudoradial spaces and copies of $\omega_1+1$}
\author[Bella]{Angelo Bella}
\author[Dow]{Alan Dow}
\author[Hern\'andez]{Rodrigo Hern\'andez-Guti\'errez}
\address[Bella]{Department of Mathematics and Computer Science, University of Catania, Citt\'a universitaria, viale A. Doria 6, 95125 Catania, Italy}
\email[Bella]{bella@dmi.unict.it}
\address[Dow]{Department of Mathematics, UNC-Charlotte, 9201 University City Blvd., Charlotte, NC 28223-0001}
\email[Dow]{adow@uncc.edu}
\address[Hern\'andez]{Departamento de Matem\'aticas, Universidad Aut\'onoma Metropolitana campus Iztapalapa, Av. San Rafael Atlixco 186, Col. Vicentina, Iztapalapa, 09340, Mexico city, Mexico}
\email[Hern\'andez]{rodrigo.hdz@gmail.com}
\subjclass[2010]{Primary: 54A20. Secondary: 54A25, 54A35, 54B35, 54D30, 54G20}
\begin{document}
 
 \begin{abstract}
  In this paper we compare the concepts of pseudoradial spaces and the recently defined strongly pseudoradial spaces in the realm of compact spaces. We show that $\mathrm{MA}+\cont=\omega_2$ implies that there is a compact pseudoradial space that is not strongly pseudoradial. We essentially construct a compact, sequentially compact space $X$ and a continuous function $f:X\to\omega_1+1$ in such a way that there is no copy of $\omega_1+1$ in $X$ that maps cofinally under $f$. We also give some conditions that imply the existence of copies of $\omega_1$ in spaces. In particular, $\mathrm{PFA}$ implies that compact almost radial spaces of radial character $\omega_1$ contain many copies of $\omega_1$.
 \end{abstract}
 
 \maketitle

 \section{Introduction}
 
 All spaces are assumed to be Hausdorff.
 
 Recall that a topological space $X$ is \emph{pseudoradial} if for every non-closed subset $A\subset X$ there is a point $x\in\overline{A}\setminus A$ and a transfinite sequence $\langle x_\alpha\rangle_{\alpha<\kappa}$ with range in $A$ and converging to $x$.
 
 The systematic investigation on the topological properties of pseudoradial spaces was initiated by Arhangel'ski\u\i{} more than 40 years ago. Since then, several subclasses of pseudoradial spaces have been considered by many authors. 
 
 Recently the further notion of strongly pseudoradial spaces appeared in the literature \cite{brazas-fabel}. All ordinals in this paper will have the order topology when considered as topological spaces.
 
 \begin{defi}
  A topological space $X$ is called strongly pseudoradial if for any non-closed subset $A\subset X$ there is a limit ordinal $\gamma$ and a continuous map $f:\gamma+1\to X$ such that $f[\gamma]\subset A$ and $f(\gamma)\notin A$.
 \end{defi}
 
 In \cite{brazas-fabel} the authors pointed out that, without any loss of generality, in the above definition the ordinal $\gamma$ can be assumed to be a regular cardinal and the function $f$ injective.
 
 Roughly speaking, the difference between pseudoradial and strongly pseudoradial spaces consists in replacing transfinite converging sequences with compact ordinals.
 
 As $\omega+1$ is a compact ordinal, we immediately see that every sequential space is strongly pseudoradial, but this is the only case when we can easily determine if a space is of that kind. The passage from $\omega+1$ to the successor of an uncountable cardinal appears much more difficult. However, a remarkable consequence of the proper forcing axiom ($\mathrm{PFA}$) describes a possibility to do it for $\kappa=\omega_1$ (see also \cite[Theorem 5.14]{nyikos}).
 
 \begin{thm}\cite{ctble-tight-PFA}
  $\mathrm{PFA}$ implies that in every countably compact regular space of character at most $\omega_1$, the closure of a subset $A$ can be obtained by first adding all limits of convergent sequences and then adding to the resulting set $\hat{A}$ all points $x$ for which there is a copy $W$ of $\omega_1$ in $\hat{A}$ such that $W\cup\{x\}$ is homeomorphic to $\omega_1+1$.
 \end{thm}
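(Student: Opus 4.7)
The plan is to address the nontrivial inclusion: given $x\in\overline{A}\setminus\hat{A}$, construct a copy $W\subseteq\hat{A}$ of $\omega_1$ with $W\cup\{x\}$ homeomorphic to $\omega_1+1$. The first observation is that the character of $X$ at $x$ must be exactly $\omega_1$, since a countable local base would yield a sequence in $A$ converging to $x$ and thereby place $x$ into $\hat{A}$, contradicting the hypothesis. Fix a decreasing neighborhood base $\{U_\alpha:\alpha<\omega_1\}$ at $x$.

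The main construction proceeds by transfinite recursion on $\alpha<\omega_1$, producing points $w_\alpha\in\hat{A}\cap U_\alpha$ such that $\{w_\beta:\beta\le\alpha\}$ is order-homeomorphic to $\alpha+1$, with the extra clause that $w_\beta\in A$ whenever $\beta$ is a successor. Successor stages are immediate: choose a fresh point of $A\cap U_{\alpha+1}$. At a countable limit $\lambda$, fix a cofinal $\omega$-sequence of successors $\beta_n\uparrow\lambda$ and invoke countable compactness, amplified by $\mathrm{PFA}$ into genuine sequential compactness for countably compact spaces of character at most $\omega_1$, to extract a subsequence of $\langle w_{\beta_n}\rangle$ whose limit $w_\lambda$ lies in $\hat{A}$. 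The assumption $x\notin\hat{A}$ automatically forces $w_\lambda\ne x$. Placing $w_\lambda$ inside $U_\lambda$ requires a diagonal argument: one commits in advance to the cofinal sequences witnessing each limit, then verifies that after accounting for all such pre-committed constraints an accumulation point still lies in $U_\lambda$.

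The chief obstacle is global coherence: ensuring that $W=\{w_\alpha:\alpha<\omega_1\}$ really carries the order topology of $\omega_1$, rather than merely producing an $\omega_1$-indexed set that converges ordinally to $x$. One must exclude parasitic clustering, i.e., guarantee that for every countable limit $\lambda$ the unique cluster point of $\{w_\beta:\beta<\lambda\}$ lying in $W$ is $w_\lambda$. This is the essential place where $\mathrm{PFA}$ is invoked, and it is most naturally discharged through a $P$-ideal dichotomy argument applied to the ideal of countable subsets of $A$ whose closures in $X$ miss $x$: one horn is blocked by the combination of $x\in\overline{A}$ with the character bound, while the other yields the coherent uncountable thinning that realises $W$ as a genuine copy of $\omega_1$ inside $\hat{A}$.
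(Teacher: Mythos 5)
There is a genuine gap, and it sits exactly where you flag ``the chief obstacle.'' (Note also that the paper does not prove this statement: it is quoted from \cite{ctble-tight-PFA}, and the closest the paper comes to a proof is the proper-forcing machinery of Section \ref{section-copies-omega1}.) Your plan replaces that machinery with a transfinite recursion of length $\omega_1$, and the recursion cannot be completed as described. At a limit stage $\lambda$, the limit $w_\lambda$ of a convergent subsequence of $\langle w_{\beta_n}\rangle$ is only known to lie in $\bigcap_{\beta<\lambda}\overline{U_\beta}$, a set that need not be contained in $U_\lambda$; ``committing in advance to the cofinal sequences'' is just fixing a ladder system and gives no mechanism for steering the resulting limit point into $U_\lambda$, since that point depends on the $w_{\beta}$ chosen at earlier stages of the very same recursion. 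This is precisely the obstruction exploited by the $\diamondsuit$-type counterexamples mentioned in the introduction (Ostaszewski's space, Nyikos's example), and it is the reason the known proofs of this theorem go through forcing rather than a direct recursion. A second, independent, problem: $W\cup\{x\}\cong\omega_1+1$ forces $W\cup\{x\}$ to be compact, so for every limit $\lambda$ the countable set $\{w_\beta:\beta<\lambda\}$ must have \emph{all} of its cluster points in $X$ among $\{w_\mu:\mu\leq\lambda\}$; your recursion designates one cluster point $w_\lambda$ and does nothing to exclude the others.

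The appeal to the P-ideal dichotomy does not close either gap. The ideal $\mathcal{I}=\{C\in[A]^{\leq\omega}: x\notin\overline{C}\}$ is not obviously a P-ideal (one needs a single countable set almost containing every $C_n$ whose closure still misses $x$, which is not automatic even with character $\omega_1$); and even granting PID applies, the horn you keep only yields an uncountable $B\subseteq A$ with $[B]^{\leq\omega}\subseteq\mathcal{I}$ --- there is no guarantee that $x\in\overline{B}$, and even if so, one is left with $x\in\overline{B}\setminus\cl[\omega]{B}$, which is the original problem restated rather than solved. (The other horn is indeed blocked, as you say.) The actual argument of \cite{ctble-tight-PFA}, generalized in Section \ref{section-copies-omega1} of this paper, builds a proper poset whose conditions are finite $\in$-chains of countable elementary submodels carrying points of the traces $\mathrm{Tr}(\filt,M)$ together with finite sets of neighborhoods and filter elements; the side condition (7) is what forces limit points into the required neighborhoods, the trace filter is what makes the generic $\omega_1$-sequence closed and order-homeomorphic to $\omega_1$, properness (Theorem \ref{suitable-proper}) is the real work, and $\mathrm{PFA}$ is invoked only to meet the $\omega_1$ many dense sets $D_{\delta,n}$ and $E_\alpha$. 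If you want a non-forcing route you would need, at minimum, a substitute for the elementary-submodel side conditions that controls the closure of every countable initial segment of $W$.
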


 \begin{coro}\label{PFA-positive}
  $\mathrm{PFA}$ implies that every countably compact regular space of character at most $\omega_1$ is strongly pseudoradial.
 \end{coro}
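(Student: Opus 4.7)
The plan is to deduce the corollary almost immediately from the Theorem above by splitting into two cases according to whether the set $A$ already equals its sequential closure $\hat{A}$. Fix any non-closed $A\subset X$; the goal is to exhibit a limit ordinal $\gamma$ and a continuous map $f\colon\gamma+1\to X$ with $f[\gamma]\subset A$ and $f(\gamma)\notin A$.

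First I would dispose of the case $A\subsetneq\hat{A}$. Choosing any $x\in\hat{A}\setminus A$, the definition of $\hat{A}$ supplies a sequence $\langle a_n\rangle_{n<\omega}\subset A$ with $a_n\to x$, and then $f\colon\omega+1\to X$ defined by $f(n)=a_n$ and $f(\omega)=x$ is continuous with $f[\omega]\subset A$ and $f(\omega)=x\notin A$. This settles the corollary with $\gamma=\omega$.

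The remaining case is $\hat{A}=A$ with $A$ still not closed. Pick any $x\in\overline{A}\setminus A=\overline{A}\setminus\hat{A}$. Applying the Theorem to $A$ forces $x$ to lie in the second layer of the given decomposition of $\overline{A}$, so there is a copy $W$ of $\omega_1$ contained in $\hat{A}=A$ such that $W\cup\{x\}$ is homeomorphic to $\omega_1+1$. Composing the homeomorphism (arranged so that $\omega_1\mapsto x$) with the inclusion $W\cup\{x\}\hookrightarrow X$ produces a continuous $f\colon\omega_1+1\to X$ with $f[\omega_1]=W\subset A$ and $f(\omega_1)=x\notin A$, so $\gamma=\omega_1$ works.

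There is really no serious obstacle here: the whole argument turns on the identity $\hat{A}=A$ in the second case, which upgrades the Theorem's guarantee $W\subset\hat{A}$ into the stronger $W\subset A$ demanded by the definition of strongly pseudoradial. The only point worth double-checking is that the two-step description of $\overline{A}$ from the Theorem is genuinely exhaustive, so that every non-closed $A$ is covered by exactly one of the two cases above; but this is precisely the content of the preceding Theorem.
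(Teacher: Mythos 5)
Your proof is correct and is precisely the intended derivation: the paper states the corollary without proof as an immediate consequence of the quoted theorem, and your two-case argument (a convergent $\omega$-sequence when $A\subsetneq\hat{A}$, a copy of $\omega_1+1$ when $A=\hat{A}$) is exactly how it follows. The one point you flag -- that the homeomorphism $W\cup\{x\}\cong\omega_1+1$ can be taken to send $x$ to the top point -- is indeed fine, since $\omega_1$ is the unique point of $\omega_1+1$ of uncountable character and $W\cong\omega_1$ has none.
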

 
 On the other hand, the one-point compactification of Ostaszewski's space shows that a compact pseudoradial space may fail to be strongly pseudoradial. This follows from hereditary separability of Ostaszewki's space.
 
 A natural question is then whether it is possible to obtain the conclusion of Corollary \ref{PFA-positive} by weakening the topological hypothesis. First, we were able to produce a counterexample under Martin's axiom.

\begin{thm}\label{counterex}
$\b=\cont=\omega_2$ implies there is a compact pseudoradial space that is not strongly pseudoradial. 
\end{thm}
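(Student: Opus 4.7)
The plan is to realize the construction hinted at in the abstract: a compact, sequentially compact space $X$ together with a continuous surjection $f : X \to \omega_1+1$ admitting no continuous $h : \omega_1+1 \to X$ for which $(f\circ h)[\omega_1]$ is cofinal in $\omega_1$. I will further arrange that $f^{-1}(\omega_1) = \{\infty\}$ is a single point and that $f^{-1}[\omega_1]$ is dense, so that $A := X \setminus \{\infty\}$ is non-closed. To see that $A$ witnesses the failure of strong pseudoradiality, suppose $g : \gamma+1 \to X$ is continuous with $g[\gamma] \subseteq A$ and $g(\gamma) \notin A$. Then $g(\gamma) = \infty$, and $f \circ g : \gamma+1 \to \omega_1+1$ is continuous, so $(f \circ g)[\gamma]$ must be cofinal in $\omega_1$. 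Sequential compactness rules out $\gamma = \omega$, since any countable subset of $\omega_1$ has a countable supremum to which it converges in $\omega_1+1$. For any regular $\gamma \geq \omega_1$, restricting $g$ to a cofinal copy of $\omega_1$ inside $\gamma$ produces exactly the forbidden map, a contradiction.

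For the construction of $X$, the hypothesis $\b = \cont = \omega_2$ is used twice. First, fix a scale $\{s_\xi : \xi < \omega_2\}$ in $(\omega^\omega, \leq^*)$, available from $\b = \omega_2$. Second, enumerate in order type $\omega_2$ all the candidate continuous maps $h : \omega_1 \to X$ that could extend cofinally to $\infty$; keeping the underlying set of size $\omega_1$ ensures there are at most $\cont = \omega_2$ such candidates. A natural template is to take the underlying set of $X$ to be $(\omega_1 \times \omega) \cup \{\infty_\alpha : \alpha < \omega_1\} \cup \{\infty\}$, with each column $\{\alpha\} \times \omega \cup \{\infty_\alpha\}$ a copy of $\omega+1$ and with $\infty$ a nontrivial limit of the $\infty_\alpha$'s; the map $f$ sends $(\alpha,n)$ and $\infty_\alpha$ to $\alpha$ and $\infty$ to $\omega_1$. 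One recursively chooses the filter of neighborhoods of $\infty$, using the scale to force a controlled pattern of $\omega_1$-sequences converging to $\infty$ (ensuring pseudoradiality), while using the enumeration of candidates to ensure that each potential continuous embedding $h$ is destroyed by some countable limit ordinal at which $h$ fails to be continuous in $X$.

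The principal obstacle is the simultaneous fulfilment of pseudoradiality and the destruction of cofinal $\omega_1+1$ copies. A convergent transfinite sequence only needs tail-in-neighbourhood behaviour at the limit; a continuous map $\omega_1+1 \to X$ demands, at every countable limit $\delta < \omega_1$, that $h(\beta) \to h(\delta)$ as $\beta \to \delta$ within $X$. This \emph{local} continuity at every such $\delta$ is far more restrictive than mere global convergence at $\omega_1$, and it is exactly the rigidity the construction should exploit. The core of the argument is to admit enough $\omega_1$-sequences that every non-closed subset of $X$ converges somewhere, yet arrange by a scale-guided diagonalization that every candidate $h$ exhibits a countable-limit discontinuity. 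Verifying that the resulting $X$ remains compact and sequentially compact, so that no rogue sequence sneaks up to $\infty$, requires care in the choice of filter base, and is where I expect the technical work to concentrate.
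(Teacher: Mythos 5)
Your reduction of the theorem to building a compact, sequentially compact $X$ with a continuous $f:X\to\omega_1+1$ such that $f^{-1}(\omega_1)=\{\infty\}$ and no embedded copy of $\omega_1+1$ converges to $\infty$ with cofinal image is indeed the right target (it is essentially what the paper proves; compare Theorem \ref{thm-bc}), and you correctly isolate the key tension between tail-convergence of a transfinite sequence and continuity at every countable limit. But the two places where you defer the work are genuine gaps, and they are where the proof actually lives. First, your diagonalization is based on a false count: the functions $h:\omega_1\to X$ with $\lvert X\rvert=\omega_1$ number $2^{\omega_1}$, and $\b=\cont=\omega_2$ does \emph{not} imply $2^{\omega_1}=\omega_2$; a recursion of length $\cont$ cannot enumerate all candidate continuous maps, and each stage of the recursion only has $<\cont$ objects available in any case. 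The missing idea is the paper's reduction to \emph{countable} configurations: every copy of $\omega_1$ contains a copy of the countable compact ordinal $\omega^{\cdot 2}+1$, and (Lemma \ref{branch-omega1}) any embedded $\omega_1+1$ converging to the distinguished point projects, on a club, injectively and increasingly onto the $\omega_1$-sized spine $X_\rho\cong\omega_1+1$. Hence it suffices to destroy the liftings of the at most $\omega_1^{\aleph_0}=\cont$ many copies of $\omega^{\cdot 2}+1$ sitting in the spine, which a length-$\cont$ recursion can handle; this is exactly what the inductive hypothesis $(b)_\alpha$ does by splitting a node so as to separate $\{y_{s(\omega\cdot n)}\}$ from a diagonally chosen $\{y_{s(\omega\cdot n+\varphi(n))}\}$. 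Second, you offer no mechanism for pseudoradiality of $X$ itself, which requires a convergent transfinite sequence out of \emph{every} non-closed subset, not just out of $X\setminus\{\infty\}$. The paper gets this for free from the Juh\'asz--Szentmikl\'ossy theorem: under $\cont=\omega_2$, every compact sequentially compact space is pseudoradial. So only sequential compactness has to be built in (hypothesis $(a)_\alpha$, a bookkeeping recursion over all increasing $\omega$-sequences of tree nodes), and the interaction of that bookkeeping with the $\omega^{\cdot 2}+1$-killing splittings is precisely where $\b=\omega_2$ is used, via dominating families of fewer than $\b$ functions.

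Two further points. In your preliminary reduction, the case of a regular $\gamma>\omega_1$ is not handled by ``restricting $g$ to a cofinal copy of $\omega_1$ inside $\gamma$'' --- no such cofinal copy exists when $\mathrm{cf}(\gamma)>\omega_1$; instead one notes that then $f\circ g$ is eventually constant with value $\omega_1$, forcing $g$ to hit $\infty$ before $\gamma$, a contradiction. And the paper does not realize $X$ on your ladder-system template $(\omega_1\times\omega)\cup\{\infty_\alpha\}\cup\{\infty\}$, but as the branch space $bT$ of a Koszmider $\T$-algebra, i.e., a tree of minimal extensions of subalgebras of $\powerw/\mathsf{fin}$; this is what makes both the convergence bookkeeping and the separating splittings tractable, since convergence in $bT$ can be tested branchwise (Lemma \ref{branch-conv}). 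Your template is essentially the one-point compactification of a Nyikos-type ladder space, which the paper only knows how to produce from a $\diamondsuit$-consequence; it is not clear it can be realized from $\b=\cont=\omega_2$ alone.
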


The \emph{radial character} of a pseudoradial space $X$ is the smallest cardinal $\kappa$ such that the definition of pseudoradiality for $X$ works by taking only transfinite sequences of length not exceeding $\kappa$. Thus, the following question is natural after considering Corollary \ref{PFA-positive}.

\begin{ques}\label{main-question}
 Assume $\mathrm{PFA}$. Is every compact Hausdorff pseudoradial space of radial character at most $\omega_1$ strongly pseudoradial?
\end{ques}

We were unable to answer Question \ref{main-question}. However, we can achive a partial positive result for a special class of pseudoradial spaces, as we now explain.

A sequence $\{x_\alpha:\alpha<\kappa\}$ converging to a point $x$ is called \emph{thin} if for any $\beta<\kappa$ we have that $x\notin\overline{\{x_\alpha:\alpha<\beta\}}$. A space is called \emph{almost radial} if in the usual definition of pseudoradiality we replace ``sequence'' with ``thin sequence''.

Our counterexample $X$ from Theorem \ref{counterex} is a compact, sequentially compact space $X$ with a point $\rho\in X$ of character $\omega_1$ such that there are no countable sequences or copies of $\omega_1$ converging to $\rho$ (see Theorem \ref{thm-bc}).

In Section \ref{section-copies-omega1} we include several results on existence of copies of $\omega_1$. In particular, we highlight the following result that contrasts with our counterexample from Theorem \ref{counterex}. 

\begin{thm}\label{main-positive}
  Assume $\mathrm{PFA}$. Let $X$ be a compact almost radial space of radial character at most $\omega_1$. Then every point of $X$ is either the limit of a countable sequence or the limit of a copy of $\omega_1$.
 \end{thm}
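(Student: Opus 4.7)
The plan is to fix a point $x\in X$, reduce to the case that $x$ is non-isolated and is not the limit of any convergent countable sequence, and produce a copy of $\omega_1$ converging to $x$. Applying the almost radial hypothesis to the non-closed set $X\setminus\{x\}$ and using the radial character bound yields a thin transfinite sequence $\sigma=\langle x_\alpha:\alpha<\gamma\rangle$ in $X\setminus\{x\}$ converging to $x$, with $\gamma\in\{\omega,\omega_1\}$. The sequential-limit hypothesis forces $\gamma=\omega_1$; write $A=\{x_\alpha:\alpha<\omega_1\}$ and $Y=\overline{A}$, a compact regular subspace that inherits almost-radiality of radial character $\leq\omega_1$ from $X$.

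Next I would invoke the PFA closure theorem of \cite{ctble-tight-PFA} on the set $A$ inside $Y$: once applicable, that result gives $\overline{A}=\hat{A}\cup\{y:\exists W\subseteq\hat{A}\text{ with }W\cup\{y\}\cong\omega_1+1\}$, where $\hat{A}$ denotes the iterated sequential closure. I would argue by induction on the stages of this closure that $x\notin\hat{A}$: at each successor stage $x$ cannot be added, since $x$ is not a sequential limit of any countable sequence in $X$, let alone in the previous stage; at limit stages $x$ is preserved outside because the stage is a union. Thus $x\in\overline{A}\setminus\hat{A}$, and the cited theorem forces $x$ to be the limit of a copy of $\omega_1$ lying inside $\hat{A}\subseteq X$, which is the desired conclusion.

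The principal obstacle is that the cited theorem requires the ambient space to have character at most $\omega_1$, which is not guaranteed in $Y$. My strategy is to replace $Y$ by a closed subspace $Y'\subseteq Y$ still containing $A\cup\{x\}$ with $\chi(Y')\leq\omega_1$. A natural route is an elementary submodel reduction: take a continuous $\in$-chain $\langle M_\xi:\xi<\omega_1\rangle$ of countable submodels of $H(\theta)$ containing $X$, $x$, and $\sigma$; let $M=\bigcup_\xi M_\xi$ so that $|M|=\omega_1$ and $M\cap\omega_1=\omega_1$; and set $Y'=\overline{X\cap M}$. Then $Y'$ has a dense subset of size $\omega_1$, contains $A$, and by convergence of $\sigma$ contains $x$. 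Verifying $\chi(Y')\leq\omega_1$ is the hard step: the thin-sequence structure controls countable approximations to $x$ but does not directly bound pseudocharacter at $x$, so delicate PFA-reflection arguments seem to be needed. Alternatively, one may bypass the cited theorem entirely and set up a tailored proper forcing whose generic is a continuous cofinal injection $\omega_1\to A$ converging to $x$, with properness verified at countable limit stages using almost-radiality; I expect either route to ultimately work, with the character bound (or its analogue in the properness proof) being the essential difficulty.
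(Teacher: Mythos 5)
Your opening reduction is the same as the paper's: assume $x$ has no countable sequence converging to it, use almost radiality and the radial character bound to get a thin sequence of length $\omega_1$ converging to $x$ (the paper additionally invokes \cite[Lemma 5.6]{nyikos-handbook} to replace the thin sequence by a \emph{free} one, which matters later). But from that point on your argument has a genuine gap: the step that actually produces the copy of $\omega_1$ is never carried out. Your first route -- cut down to $Y'=\overline{X\cap M}$ for an elementary submodel $M$ of size $\omega_1$ and then apply the closure theorem of \cite{ctble-tight-PFA} -- founders exactly where you say it does, and there is no reason to expect it can be repaired: having a dense subset of size $\omega_1$ puts no bound on character (under $\mathrm{PFA}$ a separable compact space such as $\beta\omega$ has no free ultrafilter of character $\leq\omega_1$), and passing to a closed subspace does not lower the character at $x$ below what the thin sequence alone provides, which is nothing. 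So the hypothesis ``character at most $\omega_1$'' of the cited theorem is simply not available, and ``delicate PFA-reflection arguments seem to be needed'' is not a proof.

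Your second route -- a tailored proper forcing adding a continuous injection of $\omega_1$ converging to $x$ -- is in fact what the paper does, but it is precisely the content you have omitted. The paper's proof runs through the machinery of Section \ref{section-copies-omega1}: the poset $\poset_{X,\B,\filt,\kappa}^\varphi$ of finite $\in$-chains of countable elementary submodels carrying points of $\mathrm{Tr}(\filt,M)$ for a maximal free filter $\filt$ of $\omega$-closed sets, whose properness (Theorem \ref{suitable-proper}) is the nontrivial part; the verification that a suitable $\varphi$ exists, done not by bounding the character of the ambient space but by taking a $\leq\omega_1$-closed elementary submodel $M$ with $\lvert M\rvert=2^{\omega_1}$ so that the $\omega$-bounded subspace generated by the free sequence lies inside $M$ and \v{S}apirovski\u{\i}'s theorem yields a dense set of points of relative character $\leq\omega_1$ in the relevant closed separable subspaces (Corollary \ref{coro-pseudoradial}); and finally Corollary \ref{coro-free-seq}, which uses the freeness of the sequence (all complete accumulation points lie outside the $\omega$-bounded subspace, and $x$ is the unique one) to force the generic copy of $\omega_1$ to converge to $x$ rather than to some unrelated point. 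None of these three ingredients -- the poset, the suitability/character verification, or the control of the limit point -- appears in your proposal, so as written it is a plan rather than a proof.
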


The search of copies of $\omega_1$ goes back to \cite[Problem 1.3]{nyikos-handbook} where Nyikos asks whether there exists a first countable, countably compact, non-compact space which does not contain a copy of $\omega_1$. Nyikos himself proved that a consequence of $\diamondsuit$, which is in fact compatible with $\mathrm{MA}+\cont>\omega_1$, gives a positive answer to his question (see \cite[19.1]{fremlin}). In \cite{fremlin} Fremlin, assuming $\mathrm{PFA}$,  gives sufficient conditions for the existence of copies of $\omega_1$ for spaces that can be mapped onto $\omega_1$ and gives a series of applications. This question was also considered under PFA in \cite{ctble-tight-PFA}, where other conditions for the existence of copies of $\omega_1$ are given. Later, Eisworth and Nyikos in \cite{eis-nyi} give a model of $\mathrm{CH}$ in which every first countable, countably compact, non-compact space contains a copy of $\omega_1$. Also, in \cite{eisworth-perf_preim} Eisworth shows that any perfect pre-image of $\omega_1$ with countable tightness contains a closed copy of $\omega_1$.

\section{$\T$-algebras}\label{talgebradef}
 
 In order to construct the space from Theorem \ref{counterex}, we will use Koszmider's notion of $\T$-algebra from \cite{kosz}. $\T$-algebras are special kinds of the minimally generated  Boolean  algebras  first  studied  by  Koppelberg \cite{min-gen}. All our Boolean algebras will be subalgebras of $\powerw /\mathsf{fin}$ with the order relation $\subset\sp\ast$ of almost inclusion. Section 3 of \cite{dow-pichardo} contains a a thorough analysis of the following discussion.
 
 Given a Boolean algebra $A\subset\powerw$ and $x\subset\omega$, the Boolean algebra generated by $A\cup\{x\}$ is
 $$
 A(x)=\{(a_0\cap x)\cup(a_1\setminus x):a_0,a_1\in A\}.
 $$
 
 Let $A\subset\powerw$ be a Boolean algebra and $u$ an ultrafilter of $A$. An element $x\subset\omega$ is called \emph{minimal} for $\pair{A,u}$ if $u$ is the only ultrafilter in $A$ that does not generate an ultrafilter in the Boolean algebra $A(x)$. Notice that in this case, $\omega\setminus x$ is also minimal for $\pair{A,u}$. 
 
 Let $\lambda\leq\cont$ and $\A=\{a_{\alpha+1}:\alpha<\lambda\}\subset[\omega]\sp\omega$. For each $\alpha\leq\lambda$, define $B_\alpha$ to be the Boolean algebra generated by $\{a_{\beta+1}:\beta<\alpha\}$. We will say that $\A$ is a \emph{coherent minimal sequence} if for every $\alpha<\lambda$ the filter $u_{\alpha}$ in $B_\alpha$ generated by $\{a_{\beta+1}:\beta<\alpha\}$ is an ultrafilter and $a_{\alpha+1}$ is minimal for $\pair{B_\alpha,u_{\alpha}}$. 

  Let us describe the Stone space of $B_\lambda$. Consider $\alpha<\lambda$. Let $x_\alpha$ the filter in $B_\lambda$ generated by $u_\alpha\cup\{\omega\setminus a_{\alpha+1}\}$. Since $a_{\alpha+1}$ is minimal for $\pair{B_\alpha,u_\alpha}$, $x_\alpha\cap B_{\alpha+1}$ is an ultrafilter (thus, proper) in $B_{\alpha+1}$. By recursion it is possible to show that, $x_\alpha\cap B_{\beta}$ is in fact an ultrafilter in $B_\beta$ for all $\alpha<\beta\leq\lambda$. It is not hard to conclude that the Stone space of $B_\lambda$ is equal to $X_\lambda=\{u_\lambda\}\cup\{x_\alpha:\alpha<\lambda\}$. 

  Let $\alpha<\lambda$. Then the clopen set defined by $\omega\setminus a_{\alpha+1}$ in the Stone space misses $\{x_\beta:\alpha<\beta<\lambda\}\cup\{u_\lambda\}$. Thus, the segment $\{x_\beta:\beta\leq\alpha\}$ is open. This easily implies that the Stone space of $B_\lambda$ is scattered.
  
  Next, we define $\T$-algebras. First, given $t\in2\sp{<\cont}$ such that $\mathsf{dom}(t)=\alpha+1$ for some $\alpha$, we define $t\sp\star=(t\!\!\restriction_\alpha)\sp\frown(1-t(\alpha))$. Each $\T$-algebra will be defined by using a tree. A subtree $T$ of $2\sp{<\cont}$ is called \emph{acceptable} if the following two conditions hold:
  \begin{enumerate}[label=(\roman*)]
   \item the domain of each member of $T$ is a successor ordinal,
   \item if $t\in T$ and $\alpha<\mathsf{dom}(t)$, then $t\!\!\restriction_{\alpha+1}\in T$, and
   \item for all $t\in2\sp{<\cont}$, $t\in T$ if and only if $t\sp\star\in T$.
  \end{enumerate}

  For each $p\in2\sp{\leq\cont}$, $o(p)$ will denote its order type. Given an acceptable tree $T\subset2\sp{<\cont}$, a $T$-algebra is a Boolean algebra generated by a sequence $\{a_t:t\in T\}\subset[\omega]\sp\omega$ such that the following properties hold
  \begin{enumerate}[label=(\alph*)]
   \item given $t\in T$, $\{a_{t\restriction{\alpha+1}}:\alpha+1<o(t)\}$ is a coherent minimal sequence, and
   \item given $t\in T$, $a_t=\omega\setminus a_{t\sp\star}$.
  \end{enumerate}
  
  It turns out that the Stone space of a $\T$-algebra is easy to describe. In fact, the set of ultrafilters of a $T$-algebra $\A=\{a_t:t\in T\}$ is in one-to-one correspondence with the set $bT$ of branches of $T$. Given $p\in bT$, $\{a_{p\restriction_{\alpha+1}}:\alpha<\mathsf{dom}(p)\}$ generates the ultrafilter that corresponds to $p$. So without loss of generality, we will identify the Stone space of $\A$ with $bT$.
  
  It also turns out that some of the topological properties of $bT$ can be checked by looking at the topology generated by the branches. For every branch $p\in bT$, by property (a) in the definition of $T$-algebra, there is a topological space $X_p=\{x_{p\restriction\alpha}:\alpha<o(p)\}\cup\{p\}$ defined by the coherent minimal sequence $\{a_{p\restriction{\alpha+1}}:\alpha<o(p)\}$ as described above.
  
  Given $p,q\in bT$ with $p\neq q$, we define $p\wedge q$ to be the largest common predecesor of $p$ and $q$. Thus, $o(p\wedge q)$ is the ordinal $\alpha$ such that $p\!\!\restriction_\alpha=q\!\!\restriction_\alpha$ but $p(\alpha)\neq q(\alpha)$.
  
  Fix $p\in bT$. Then there is a continuous function $\pi_p:bT\to X_p$ that projects the Stone space of the $T$-algebra onto the branch space. For every $q\in bT\setminus\{p\}$, $\pi_p(q)=x_{p\restriction{o(p\wedge q)}}$ and $\pi_p(p)=p$. Equivalently, for every $\alpha<o(p)$
  $$
  \pi_p\sp\leftarrow(x_{p\restriction{\alpha}})=\{q\in bT:o(p\wedge q)=\alpha\}.
  $$
  The relation we are interested in is summarized with the following results.
  
  \begin{lemma}\cite[Proposition 3.4]{dow-cclosed}
   Let $p\in bT$, $S\subset bT\setminus\{p\}$. Then $p\in\cl[bT]{S}$ if and only if $p\in\cl[X_p]{\{\pi_p(q):q\in S\}}$.
  \end{lemma}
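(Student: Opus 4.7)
The forward direction is immediate from continuity: if $p\in\cl[bT]{S}$, then $p=\pi_p(p)\in\pi_p\bigl[\cl[bT]{S}\bigr]\subseteq\cl[X_p]{\pi_p[S]}$. The converse reduces to showing that every basic clopen neighborhood $U$ of $p$ in $bT$ contains a set of the form $\pi_p^{\leftarrow}(V)$ for some open neighborhood $V$ of $p$ in $X_p$; granting this, the hypothesis $p\in\cl[X_p]{\pi_p[S]}$ produces $q\in S$ with $\pi_p(q)\in V$, whence $q\in\pi_p^{\leftarrow}(V)\subseteq U$, giving $U\cap S\neq\emptyset$ as required.

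To build $V$, I use the fact recalled in Section~\ref{talgebradef} that the ultrafilter $u_p$ on $\A$ is generated as a filter by $\{a_{p\restriction(\alpha+1)}:\alpha<o(p)\}$. Writing $U=[a]$ for some $a\in u_p$, the generation property supplies $\alpha_1<\cdots<\alpha_n<o(p)$ with
$$b:=a_{p\restriction(\alpha_1+1)}\cap\cdots\cap a_{p\restriction(\alpha_n+1)}\subseteq a.$$
Crucially, $b$ lies in the sub-algebra $B_p$ of $\A$ whose Stone space is (identified with) $X_p$. Take $V\subseteq X_p$ to be the clopen set determined by $b$; then $V$ is an open neighborhood of $p$ in $X_p$ because $b$ belongs to the ultrafilter of $B_p$ corresponding to $p$. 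A short verification from the explicit formula for $\pi_p$---equivalently, from the fact that $\pi_p$ restricts ultrafilters of $\A$ to ultrafilters of $B_p$, so that $b\in\pi_p(q)$ iff $b\in u_q$ for $b\in B_p$---shows $\pi_p^{\leftarrow}(V)=[b]$ in $bT$, and this is contained in $[a]=U$ since $b\subseteq a$.

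The single nontrivial step is the reduction of an arbitrary $a\in u_p$ to an inclusion $b\subseteq a$ with $b$ a finite intersection of the generators indexed along the branch $p$; this relies precisely on the filter-generation of $u_p$ built into the definition of a $T$-algebra. Everything else is the standard Stone-duality correspondence between clopens of a Stone space and elements of the underlying Boolean algebra, applied to the sub-algebra inclusion $B_p\hookrightarrow\A$ dual to $\pi_p$.
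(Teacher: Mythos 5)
Your proof is correct: the key observation that $\pi_p$ is Stone-dual to the inclusion $B_p\hookrightarrow\A$ (so that $\pi_p^{\leftarrow}$ of the clopen subset of $X_p$ determined by $b\in B_p$ is exactly the clopen subset of $bT$ determined by $b$), combined with the fact that the ultrafilter corresponding to $p$ is generated by the sets $a_{p\restriction(\alpha+1)}$, all of which lie in $B_p$, is precisely what makes the equivalence work, and this is the standard argument for the cited result (the paper itself gives no proof, quoting \cite[Proposition 3.4]{dow-cclosed}). The only cosmetic point is that the inclusion $b\subseteq a$ obtained from filter generation should be almost-inclusion $b\subseteq^{*}a$, since the algebras are subalgebras of $\powerw/\mathsf{fin}$; this changes nothing, as the associated clopen sets depend only on classes mod finite.
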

  
  \begin{lemma}\cite[Proposition 3.4]{dow-cclosed}\label{branch-conv}
   Let $p\in bT$ and $\{p_n:n<\omega\}\subset bT\setminus\{p\}$. Then $\{p_n:n<\omega\}$ converges to $p$ in $bT$ if and only if $\{\pi_p(p_n):n<\omega\}$ converges to $p$ in $X_p$.
  \end{lemma}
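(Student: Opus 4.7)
The plan is to prove both implications using the continuity of $\pi_p$ noted earlier, together with a concrete description of a neighborhood base of $p$ in $bT$ that corresponds, under $\pi_p$, to the standard neighborhood base of $p$ in $X_p$.

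The forward direction is immediate: $\pi_p:bT\to X_p$ is continuous with $\pi_p(p)=p$, so any sequence converging to $p$ in $bT$ has image converging to $p$ in $X_p$.

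For the converse, I would first isolate a suitable neighborhood base at $p$ in $bT$. The ultrafilter on the $\T$-algebra corresponding to $p$ is generated by $\{a_{p\restriction{\alpha+1}}:\alpha+1<o(p)\}$, and the coherence of this minimal sequence renders the family almost-decreasing, so the clopen sets $U_\alpha=[a_{p\restriction{\alpha+1}}]$ of the Stone topology form a neighborhood base at $p$. The next step is to rewrite $U_\alpha$ in terms of $\pi_p$: using $\pi_p(q)=x_{p\restriction{o(p\wedge q)}}$ for $q\neq p$, and invoking property (b) of $\T$-algebras at the split $\beta=o(p\wedge q)$ (which forces $a_{p\restriction{\beta+1}}$ and $a_{q\restriction{\beta+1}}$ to be complements in $\omega$), one checks that $U_\alpha=\pi_p^\leftarrow(W_\alpha)$, where $W_\alpha=\{x_{p\restriction\gamma}:\alpha<\gamma<o(p)\}\cup\{p\}$ is the standard basic clopen neighborhood of $p$ in $X_p$. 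Given this identification, the hypothesis $\pi_p(p_n)\to p$ in $X_p$ says that for each $\alpha$ eventually $\pi_p(p_n)\in W_\alpha$, hence eventually $p_n\in U_\alpha$, so $p_n\to p$ in $bT$.

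The main obstacle will be the verification that $\{U_\alpha:\alpha+1<o(p)\}$ is a neighborhood base at $p$. This hinges on two features of the $\T$-algebra setup: first, that a coherent minimal sequence is almost-decreasing, so no finite meets of generators need be formed when extracting a basic neighborhood from the Stone topology; and second, that the complementation relation of property (b) guarantees that a branch $q$ with $o(p\wedge q)\le\alpha$ does not contain $a_{p\restriction{\alpha+1}}$ in its ultrafilter. Once these two points are confirmed, the remainder of the argument reduces to bookkeeping against the definitions of $\pi_p$ and the Stone topology.
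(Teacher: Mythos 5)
The paper does not actually prove this lemma (it is quoted from \cite[Proposition 3.4]{dow-cclosed}), so I am judging your argument on its own terms. The forward direction is fine. The converse, however, rests on claims that are false for general coherent minimal sequences. First, such a sequence need not be almost-decreasing: take $a_{t\restriction 1}=E$ (the evens) and $a_{t\restriction 2}=O\cup E'$, where $O$ is the odds and $E'$ is an infinite, co-infinite subset of $E$. One checks directly that $a_{t\restriction 2}$ is minimal for the pair consisting of the four-element algebra and the ultrafilter at $E$, yet $a_{t\restriction 2}\not\subset^* a_{t\restriction 1}$. In the same example, the branch $q$ splitting off at level $0$ (so $a_{q\restriction 1}=O$) has $O\subset a_{p\restriction 2}$, hence $a_{p\restriction 2}\in u_q$ even though $o(p\wedge q)=0$; this refutes your second bullet and shows that only $U_\alpha\supset\pi_p^{\leftarrow}(W_\alpha)$ holds, not equality. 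More seriously, the tail $W_\alpha$ need not be a neighborhood of $p$ at all: the paper only shows that initial segments $\{x_{p\restriction\beta}:\beta\leq\alpha\}$ are \emph{open}, not closed, and $p$ can lie in the closure of a proper initial segment (e.g.\ along a branch of length $\omega+2$ built over the finite--cofinite algebra, a subsequence of the isolated points $x_n$, $n<\omega$, can converge to $p$ while avoiding $x_{\omega+1}$). So the step ``$\pi_p(p_n)\to p$ implies $\pi_p(p_n)\in W_\alpha$ eventually'' is exactly where the argument breaks.

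The architecture is nevertheless right and can be repaired by replacing the tails $W_\alpha$ with genuine basic neighborhoods. Let $B^p$ denote the subalgebra generated by $\{a_{p\restriction{\gamma+1}}:\gamma<o(p)\}$, so that $X_p$ is its Stone space. If $q\in bT\setminus\{p\}$ and $\beta=o(p\wedge q)$, then $u_q$ contains $\{a_{p\restriction{\gamma+1}}:\gamma<\beta\}\cup\{\omega\setminus a_{p\restriction{\beta+1}}\}$, which already generates the ultrafilter $x_{p\restriction\beta}$ of $B^p$; hence $u_q\cap B^p=\pi_p(q)$, and therefore for every $b\in B^p$ the clopen set of $bT$ determined by $b$ is exactly $\pi_p^{\leftarrow}$ of the clopen set of $X_p$ determined by $b$. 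This is the correct substitute for your identity $U_\alpha=\pi_p^{\leftarrow}(W_\alpha)$. Since $u_p$ is generated by $\{a_{p\restriction{\gamma+1}}:\gamma<o(p)\}$, every neighborhood of $p$ in $bT$ contains the clopen set of some \emph{finite intersection} $b$ of these generators (finite intersections are unavoidable, but harmless: eventual membership is preserved under them), and the corresponding trace on $X_p$ is a clopen neighborhood of $p$ there. Now $\pi_p(p_n)\to p$ in $X_p$ gives $\pi_p(p_n)$ eventually in that trace, hence $p_n$ eventually in the given neighborhood, which is the desired convergence in $bT$.
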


  Since we are interested in copies of $\omega_1$, we would like to find a necessary condition for the existence of copies of $\omega_1$ that can be checked in branches and avoided through careful construction of the $T$-algebra. 
  
  \begin{lemma}\label{branch-omega1}
   Let $f:\omega_1+1\to bT$ be an embedding. Then there is a closed, unbounded set $S\subset\omega_1$ such that $(\pi_{f(\omega_1)}\circ f)\!\!\restriction{\!S}$ is one-to-one and increasing. Thus, $X_{f(\omega_1)}$ contains a copy of $\omega_1+1$.
  \end{lemma}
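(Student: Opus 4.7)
The plan is to study $p := f(\omega_1)$ together with the projection $g = \pi_p \circ f : \omega_1+1 \to X_p$. Since $f$ is injective and $g(\omega_1) = p$, for every $\alpha < \omega_1$ we have $g(\alpha) \neq p$, and so there is a unique ordinal $\xi_\alpha < o(p)$ with $g(\alpha) = x_{p\restriction\xi_\alpha}$. I would aim to find a club $S \subset \omega_1$ on which $\alpha \mapsto \xi_\alpha$ is strictly increasing; from this both conclusions of the lemma follow at once, because $g\!\!\restriction_{S\cup\{\omega_1\}}$ then becomes a continuous injection from the compact space $S\cup\{\omega_1\}$ (which is itself homeomorphic to $\omega_1+1$) into the Hausdorff space $X_p$, and so is an embedding whose image is a copy of $\omega_1+1$.

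The key input is continuity of $g$ at $\omega_1$. For each $\eta < o(p)$, the description of $X_p$ as the Stone space of the coherent minimal sequence $\{a_{p\restriction\alpha+1} : \alpha < o(p)\}$ recalled earlier shows that the segment $\{x_{p\restriction\beta} : \beta \leq \eta\}$ is clopen in $X_p$, so its complement is a clopen neighborhood of $p$. Continuity at $\omega_1$ then supplies an ordinal $\rho(\eta) < \omega_1$ with $\xi_\alpha > \eta$ for every $\alpha \geq \rho(\eta)$, yielding a function $\rho : o(p) \to \omega_1$.

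Next I would set $h(\alpha) = \rho(\xi_\alpha)$ and let $S$ be the set of $\alpha < \omega_1$ that are closed under $h$, meaning $h(\beta) < \alpha$ for every $\beta < \alpha$; a routine closure argument shows $S$ is a club. For any $\alpha \in S$ and any $\beta < \alpha$, membership in $S$ gives $\alpha > \rho(\xi_\beta)$, and hence $\xi_\alpha > \xi_\beta$ by the defining property of $\rho$. So $\xi\!\!\restriction S$ is strictly increasing, which is exactly the required condition.

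I do not expect a genuine obstacle here; the only point that needs care is verifying that the basic clopen neighborhoods of $p$ in $X_p$ really have the form used above, and this is already transparent from the Stone-space description of $X_p$. The rest is a standard club-by-closure argument, combined with the fact that a continuous injection from a compact space into a Hausdorff space is automatically an embedding.
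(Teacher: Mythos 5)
Your overall architecture (reduce to a function $\rho:o(p)\to\omega_1$ with $\xi_\alpha>\eta$ for all $\alpha\geq\rho(\eta)$, then take the club of ordinals closed under $\alpha\mapsto\rho(\xi_\alpha)$, then use that a continuous injection from a compact space into a Hausdorff space is an embedding) would work \emph{if} $\rho$ existed. The gap is exactly at the point you flag and then dismiss: the claim that the initial segment $\{x_{p\restriction{\beta}}:\beta\leq\eta\}$ is \emph{clopen} in $X_p$, so that its complement is a neighborhood of $p$. The Stone-space description in Section \ref{talgebradef} only yields that these segments are \emph{open}: each $x_{p\restriction{\gamma}}$ lies in the clopen set determined by $\omega\setminus a_{p\restriction{\gamma+1}}$, and that clopen set is contained in (but in general properly contained in) the segment up to $\gamma$. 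A basic neighborhood of $p$ is the clopen set determined by a \emph{finite} intersection $a_{p\restriction{\gamma_1+1}}\cap\dots\cap a_{p\restriction{\gamma_k+1}}$, and nothing forces such a set to avoid a given infinite initial segment. Indeed closedness fails for coherent minimal sequences in general: partition $\omega$ into infinite sets $c_n$, put $a_{n+1}=\omega\setminus(c_0\cup\dots\cup c_n)$ and $a_{\omega+1}=\bigcup_{n\in E}c_n$ with $E$ infinite and co-infinite; this is a coherent minimal sequence in which every basic neighborhood of $u_{\omega+1}$ contains cofinitely many of $\{x_n:n\in E\}$, so the segment $\{x_\beta:\beta\leq\omega\}$ is not closed and its complement is not a neighborhood of the top point. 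Without closedness, continuity of $\pi_p\circ f$ at $\omega_1$ tells you nothing about preimages of segments; when $o(p)>\omega_1$ a segment is uncountable and its preimage could a priori be cofinal in $\omega_1$, so your $\rho(\eta)$ need not exist.

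The paper's proof is organized precisely to avoid needing closed segments. It first shows that each fiber of $\pi_p\circ f$ over a point $x_{p\restriction{\beta}}\neq p$ is countable (using the clopen pair $a_{p\restriction{\beta+1}}$, $\omega\setminus a_{p\restriction{\beta+1}}$ to separate $p$ from that single point), and then builds the club $S=\{s_\alpha:\alpha<\omega_1\}$ by a recursion from below: at a limit stage it sets $s_\alpha=\sup_{\beta<\alpha}s_\beta$ and proves increasingness by invoking continuity of $\pi_p\circ f$ only at the \emph{countable} ordinal $s_\alpha$, where the relevant neighborhood of the non-top point $(\pi_p\circ f)(s_\alpha)$ is an \emph{open} initial segment --- exactly the property that is actually available. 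You would need to replace your step with something of this kind; as written, the argument does not go through.
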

  \begin{proof}
   Let $p=f(\omega_1)$. First, notice that the fibers of points of $X_p\setminus\{p\}$ under $\pi_p\circ f$ are all countable. Otherwise, there is an uncountable $T\subset\omega_1$ and $\beta<o(p)$ such that $(\pi_p\circ f)[T]=\{x_{p\restriction{\beta}}\}$. This would imply that $a_{p\restriction{\beta+1}}$ and $\omega\setminus a_{p\restriction{\beta+1}}$ define clopen sets that separate $p=(\pi_p\circ f)(\omega_1)$ from $(\pi_p\circ f)[T]$ in $X_p$, which contradicts the continuity of $\pi_p\circ f$.
   
  Thus, there is an increasing enumeration $\{\xi(\alpha):\alpha<\omega_1\}$ of the set $\{\beta<o(p):\exists \alpha<\omega_1 ( \pi_p(f(\alpha))=x_{p\restriction\beta})\}$. For each $\alpha<\omega_1$, let $T_\alpha=\{\beta<\omega_1:\pi_p(f(\beta))=x_{p\restriction\xi(\alpha)}\}$.  We shall recursively define an increasing injective function $\sigma:\omega_1\to\omega_1$ and an element $s_\alpha\in T_{\sigma(\alpha)}$ for each $\alpha<\omega_1$.
  
  For every non-limit ordinal $\alpha<\omega_1$, let $\sigma(\alpha)=\min(\omega_1\setminus\sigma[\alpha])$ and let $s_\alpha$ be the first element of  $T_{\sigma(\alpha)}$. Now assume that $\alpha<\omega_1$ is a limit ordinal. Define $s_\alpha=\sup\{s_\beta:\beta<\alpha\}<\omega_1$, then there exists $\gamma<\omega_1$ such that $s_\alpha\in T_\gamma$. We then define $\sigma(\alpha)=\gamma$. 
  
  Assume that for some $\delta<\alpha$ we have that $\gamma\leq\sigma(\delta)$.  By continuity, $\{(\pi_p\circ f)(s_\beta):\beta<\alpha\}$ converges to $(\pi_p\circ f)(s_\alpha)$. However, initial segments are open in $X_p$ so $\{x_{p\restriction{\beta}}:\beta\leq\xi(\sigma(\delta))\}$ is an open set that contains $(\pi_p\circ f)(s_\alpha)$ and misses $\{(\pi_p\circ f)(s_\beta):\delta<\beta<\alpha\}$. This is a contradiction so we obtain that $\sigma(\alpha)>\sigma(\delta)$ for all $\delta<\alpha$.
  
  This completes the construction and it easily follows that the set $S=\{s_\alpha:\alpha<\omega_1\}$ has the properties required.
 \end{proof}
  
  In our arguments below, we will construct our acceptable tree $T$ by recursion. So we will have the typical situation where we have constructed a tree $T\sp\prime$ which will eventually be a subtree of $T$. Clearly, at this point of the construction, it is possible to consider the space of branches $bT\sp\prime$ of $T\sp\prime$. We will use the notation we defined for the branch space on $bT\sp\prime$. Also, notice that if $q\in bT\sp\prime$ and $p\in bT$ is such that $q\subset p$, then there is a projection map $\pi_q\sp{p}:X_p\to X_q$ where $\pi_q\sp{p}(x)=x$ if $x\subset q$ and $\pi_q\sp{p}(x)=q$ if $q\subset x\subset p$.

  \section{Proof of Theorem \ref{counterex}}
  
  We will assume $\b=\cont=\omega_2$ in this section. In order to construct the space required in Theorem \ref{counterex}, we will construct a sequence $\{t_\alpha:\alpha<\cont\}\subset 2\sp{<\cont}$ with the property that for all $\beta<\cont$ there is $\alpha<\beta$ and $i\in 2$ such that $t_\beta=(t_\alpha)\sp\frown{i}$. Then our acceptable tree will be $$T=\{(t_\alpha)\sp\frown{i}:\alpha<\cont,i\in 2\}.$$ We will also recursively define a $T$-algebra $\{a_t:t\in T\}$ (with the notation of Section \ref{talgebradef}) along with the definition of the tree. Then the branch space $bT$ will be the space we are looking for. Clearly, $bT$ is compact.
  
  For $\alpha<\omega_1$, define $t_\alpha\in 2\sp\alpha$ to be such that $t_\alpha(\beta)=0$ for all $\beta<\alpha$ and let $\{a_{t_{\alpha+1}}:\alpha<\omega_1\}$ be any strictly $\subset\sp\ast$-decreasing sequence of infinite subsets of $\omega$. We remark that by the discussion above, both $(t_\alpha)\sp\frown0=t_{\alpha+1}$ and $(t_\alpha)\sp\frown1=t_{\alpha+1}\sp\ast$ will be in $T$, and both $a_{t_{\alpha+1}}$ and $a_{t_{\alpha+1}\sp\ast}=\omega\setminus a_{t_{\alpha+1}}$ are defined, for every $\alpha<\omega_1$. Let $\rho\in2\sp{\omega_1}$ be such that $\rho(\beta)=0$ for all $\beta<\omega_1$.
  
  Clearly, the topology defined on $X_{\rho}=\{x_{t_\alpha}:\alpha<\omega_1\}\cup\{\rho\}$ is the order topology. Once we have the tree $T$ completely defined, if $p\in bT$ is such that $\rho\subset p$, then the function $\pi\sp{p}_\rho\circ\pi_p$ maps $bT$ onto $X_\rho$.We make a record of this fact as follows.
  
  \begin{claim}
  $X_{\rho}$ is naturally homeomorphic to $\omega_1+1$.
   \end{claim}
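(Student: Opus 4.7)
The plan is to define the natural bijection $h:\omega_1+1\to X_\rho$ by $h(\alpha)=x_{t_\alpha}$ for $\alpha<\omega_1$ and $h(\omega_1)=\rho$, and verify that it is a homeomorphism. Both spaces are compact Hausdorff---the former trivially, the latter as the Stone space of $B_{\omega_1}$ by the description recalled in Section \ref{talgebradef}---so it suffices to show that $h$ is continuous, since a continuous bijection between compact Hausdorff spaces is automatically a homeomorphism.

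To verify continuity, I would work with a subbasis for the Stone topology on $X_\rho$, namely the traces on $X_\rho$ of the clopens defined by $a_{t_{\alpha+1}}$ and by $\omega\setminus a_{t_{\alpha+1}}$, as $\alpha$ ranges over $\omega_1$. Recalling that $x_{t_\beta}$ is the ultrafilter generated by $u_\beta\cup\{\omega\setminus a_{t_{\beta+1}}\}$, that $\rho=u_{\omega_1}$ contains every $a_{t_{\alpha+1}}$, and that $\{a_{t_{\alpha+1}}\}_{\alpha<\omega_1}$ is strictly $\subset^*$-decreasing (so $\omega\setminus a_{t_{\beta+1}}\subset^*\omega\setminus a_{t_{\alpha+1}}$ whenever $\beta<\alpha$), a short calculation yields that the clopen defined by $\omega\setminus a_{t_{\alpha+1}}$ meets $X_\rho$ exactly in $\{x_{t_\beta}:\beta\le\alpha\}$, while the clopen defined by $a_{t_{\alpha+1}}$ meets $X_\rho$ in $\{x_{t_\beta}:\alpha<\beta<\omega_1\}\cup\{\rho\}$. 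Under $h$, these pull back to $[0,\alpha]$ and $(\alpha,\omega_1]$ respectively, both open in the order topology on $\omega_1+1$. Hence $h$ is continuous, and therefore a homeomorphism.

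There is no substantive obstacle here---the content has essentially been laid out in the preceding discussion of coherent minimal sequences and their Stone spaces, and in particular in the earlier observation that the initial segments $\{x_\beta:\beta\le\alpha\}$ are clopen. The only step requiring any care is the clopen identification above, which is immediate from the $\subset^*$-decreasing property of the generating sequence together with the definitions of the relevant ultrafilters $x_{t_\beta}$ and $u_{\omega_1}$.
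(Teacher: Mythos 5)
Your proof is correct and follows the same route the paper intends: the paper offers no explicit argument, simply asserting that the topology on $X_\rho$ is the order topology, which rests on exactly the clopen-trace computation you carry out (the Section 2 observation that the clopen of $\omega\setminus a_{t_{\alpha+1}}$ cuts out the initial segment $\{x_{t_\beta}:\beta\le\alpha\}$ while that of $a_{t_{\alpha+1}}$ cuts out the complementary final segment together with $\rho$). Your additional packaging via a continuous bijection from the compact space $\omega_1+1$ onto the Hausdorff Stone space is a clean and valid way to finish.
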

   
   For sake of notational simplicity, for each $\alpha<\omega_1$, the filter $x_{t_\alpha}$ will be denoted by $y_\alpha$. We also notice that $bT$ has uncountable tightness.
 
 \begin{claim}\label{tightness}
 There exists $p\in bT$ such that $\rho\subset p$ and $p$ has uncountable tightness in $bT$.
 \end{claim}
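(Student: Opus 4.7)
The plan is to pull back the uncountable tightness at $\rho\in X_\rho\cong\omega_1+1$ through the continuous surjection $\pi=\pi_\rho^p\circ\pi_p:bT\to X_\rho$ recorded just before Claim~\ref{tightness}. Recall that $\pi$ sends a branch $q\in bT$ with $q\not\supset\rho$ to $y_{o(q\wedge\rho)}$ and sends $q\supset\rho$ to $\rho$.

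First I would produce the witness set. For each $\alpha<\omega_1$, condition (iii) of acceptability, applied to $t_{\alpha+1}=t_\alpha^\frown 0\in T$, puts $t_\alpha^\frown 1=(t_{\alpha+1})^\star$ in $T$, so this node extends to some branch $q_\alpha\in bT$. Put $A=\{q_\alpha:\alpha<\omega_1\}$. Since $q_\alpha\restriction\alpha=\rho\restriction\alpha$ while $q_\alpha(\alpha)\neq\rho(\alpha)$, one reads off $o(q_\alpha\wedge\rho)=\alpha$, so $\pi(q_\alpha)=y_\alpha$. Hence $\pi(A)=\{y_\alpha:\alpha<\omega_1\}$, which is dense in $X_\rho$.

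Next I would extract $p$ by compactness. Since $bT$ is compact and $X_\rho$ Hausdorff, $\pi$ is a closed map, so $\pi(\overline{A})=\overline{\pi(A)}=X_\rho$. Choose any $p\in\overline{A}$ with $\pi(p)=\rho$; by the definition of $\pi$ this is exactly to say $\rho\subset p$, providing the branch demanded by the claim.

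It remains to verify $p$ has uncountable tightness at itself. For any countable $B=\{q_{\alpha_n}:n<\omega\}\subset A$, let $\alpha^*=\sup_n\alpha_n<\omega_1$. Then $\pi(B)\subseteq\{y_\beta:\beta\leq\alpha^*\}$, a closed proper initial segment of $X_\rho\cong\omega_1+1$ that misses $\rho$. Continuity of $\pi$ therefore rules out $p\in\overline{B}$: otherwise $\rho=\pi(p)\in\overline{\pi(B)}\subseteq\{y_\beta:\beta\leq\alpha^*\}$, a contradiction. I do not foresee a real obstacle; the mild subtlety is that $T$ is still being built, but the argument only uses acceptability of $T$ (to locate the $q_\alpha$'s) and the existence of the projection $\pi$, both of which hold of the eventual tree regardless of how the construction continues.
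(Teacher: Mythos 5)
Your proof is correct and is essentially the paper's argument: the same witness set $A=\{q_\alpha:\alpha<\omega_1\}$ of branches splitting off $\rho$ at level $\alpha$, a cluster point $p$ of $A$ shown to extend $\rho$, and the observation that any countable $B\subset A$ splits off below some $\alpha^*<\omega_1$ and hence cannot accumulate at $p$. The only cosmetic difference is that you route the verification through the continuous projection $\pi=\pi^p_\rho\circ\pi_p$ onto $X_\rho\cong\omega_1+1$, whereas the paper argues directly with the clopen sets determined by the $\subset^\ast$-decreasing generators $a_{t_{\beta+1}}$ --- these are the same separation.
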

 \begin{proof}
   For each $\alpha<\omega_1$, choose any $q_\alpha\in bT$ such that $t_{\alpha+1}\subset q_\alpha$ and $q_\alpha(\alpha+1)=1\neq t_{\alpha+2}(\alpha+1)$. Define $A=\{q_\alpha:\alpha<\omega_1\}$ and let $p\in bT$ be any complete accumulation point of $A$. We claim that there is no countable subset of $A$ with $p$ in its closure.
   
   First, notice that $p$ extends $\rho$. Otherwise, there is some $\beta<\omega_1$ such that $t_{\alpha+1}\subset p$ for $\alpha<\beta$ and $p(\beta)\neq t_{\beta+1}(\beta)$. Then $\omega\setminus a_{s_{\beta+1}}$ is in the ultrafilter defined by $p$ but not in the ultrafilter defined by $q_\alpha$ for all $\beta\leq\alpha<\omega_1$. Thus, we obtain a contradiction.
   
   Now, let $N\subset\omega_1$ be countable, we will show that $\{q_\alpha:\alpha\in N\}$ does not have $p$ in its closure. Let $\beta<\omega_1$ be an upper bound of $N$. If $\alpha\in N$, by the definition of $q_\alpha$, $\omega\setminus a_{t_{\alpha+1}}$ is in the ultrafilter defined by $q_\alpha$. Since $a_{t_{\beta+1}}\subset\sp\ast a_{t_{\alpha+1}}$, $\omega\setminus a_{s_{\beta+1}}$ is in the ultrafilter defined by $q_\alpha$. Thus, $a_{t_{\beta+1}}$ gives a neighborhood of $q$ that does not contain $q_\alpha$, for all $\alpha\in N$. Thus, the conclusion follows.  
  \end{proof}  

 So we are left to define $\{t_\alpha:\omega_1\leq\alpha<\cont\}$ and we will do this by recursion. Given $\omega_1\leq\alpha<\cont$, let $T_\alpha$ be the nodes of the tree that have been defined before step $\alpha$, that is, the set $\{(t_\beta)\sp\frown{i}:\beta<\alpha,i\in 2\}$. On step $\alpha\in\cont\setminus\omega_1$, we will choose $t_\alpha$ among $bT_\alpha$, the branches of $T_\alpha$ (notice that all branches are of length $<\cont$), and define $a_{(t_\alpha)\sp\frown{0}}$ and $a_{(t_\alpha)\sp\frown{1}}$. Consider a surjective function $e=(e_0,e_1):\cont\setminus\omega_1\to(\cont\setminus\omega_1)\times\cont$ such that for all $\alpha<\cont$, $e_0(\alpha)<\alpha$.
  
  It is known that under $\cont\leq\omega_2$, every sequentially compact, compact space is pseudoradial. This was proved by \v Sapirovski\u{\i} under $\mathrm{CH}$ \cite{shapirovski} and by Juh\'asz and Szentmikl\'ossy under $\cont=\omega_2$ \cite{juhasz-sz-pseudoradial}. Thus, in order to make the space pseudoradial, it is enough to make it sequentially compact.
  
  Let $\omega_1\leq\alpha<\cont$. Let $\{f_{\pair{\alpha,\beta}}:\beta<\cont\}\subset {}\sp{\omega}T_\alpha$ be an enumeration of all increasing $\omega$-sequences of elements of $T_\alpha$. In other words, $f_{\pair{\alpha,\beta}}:\omega\to T_\alpha$ is such that $f_{\pair{\alpha,\beta}}(n)\subset f_{\pair{\alpha,\beta}}(n+1)$ for all $n<\omega$.  We will require the following inductive assumption.

 \begin{quote}
 $(a)_\alpha$ For every $\omega_1\leq\beta<\alpha$, there are $A_\beta\in[\omega]\sp\omega$ and $q\in bT_\alpha$ such that $f_{e(\beta)}[A_\beta]\subset q$ and $\{x_{f_{e(\beta)}(n)}: n\in A_\beta\}$ converges to $q$ in $X_q$.
 \end{quote}
  
  \begin{claim}\label{seqcomp}
   $(a)_\alpha$ for all $\omega\leq\alpha<\omega_1$ implies that $bT$ is sequentially compact.
  \end{claim}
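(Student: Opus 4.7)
The plan is to reduce the sequential compactness of $bT$ to the inductive assumption $(a)_\alpha$: given a sequence of branches in $bT$, I will extract an increasing $\omega$-chain in $T$ lying below a suitable subsequence, apply $(a)_\alpha$ to that chain to obtain a candidate limit $q$, and transfer the convergence back to $bT$ via Lemma \ref{branch-conv}.

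Given $(p_n)_{n<\omega}\subset bT$ (one may assume distinct terms, else a constant subsequence converges), I build by recursion an increasing chain $s_0\subsetneq s_1\subsetneq\cdots$ of nodes of $T$ together with a subsequence $(p_{n_k})_{k<\omega}$ such that for every $k$ one has $s_k\subsetneq p_{n_k}$ and $p_{n_k}(o(s_k))\neq s_{k+1}(o(s_k))$; equivalently, $p_{n_k}$ diverges from any branch extending $s_{k+1}$ at the very level $o(s_k)$. The construction proceeds by iterated binary pigeonhole at the next level above each $s_k$, using acceptability of $T$ and the fact that every branch of $bT$ has cofinal prefixes in $T$ to produce the next chain element inside $T$, then selecting $p_{n_k}$ among the remaining branches taking the opposite value at level $o(s_k)$.

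The chain $\{s_n:n<\omega\}$ is an increasing $\omega$-sequence in $T$, hence lies in $T_\gamma$ for some $\gamma$ with $\omega_1\leq\gamma<\cont$, and therefore coincides with $f_{\pair{\gamma,\delta}}$ for some $\delta<\cont$. Surjectivity of $e$ produces $\beta$ with $e(\beta)=(\gamma,\delta)$, whence $(a)_\alpha$ for any $\alpha$ with $\beta<\alpha<\cont$ supplies $A_\beta\in[\omega]^\omega$ and $q\in bT_\alpha$ with $\{s_n:n\in A_\beta\}\subset q$ and $\{x_{s_n}:n\in A_\beta\}\to q$ in $X_q$. For $k\in A_\beta$, the divergence condition from the previous paragraph, combined with $s_{k+1}\subset q$, forces $o(q\wedge p_{n_k})=o(s_k)$, so $\pi_q(p_{n_k})=x_{s_k}$. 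Therefore $(\pi_q(p_{n_k}))_{k\in A_\beta}$ coincides with $(x_{s_k})_{k\in A_\beta}$, which converges to $q$ in $X_q$ by $(a)_\alpha$, and Lemma \ref{branch-conv} concludes that $(p_{n_k})_{k\in A_\beta}\to q$ in $bT$.

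The main technical obstacle is the discrepancy between $q\in bT_\alpha$ delivered by $(a)_\alpha$ and the branch of the completed tree $T$ that Lemma \ref{branch-conv} requires: the convergence $x_{s_k}\to q$ in $X_q$ only lifts to convergence toward $q$ itself, not toward proper extensions of $q$. The recursive construction of $T$ in the remainder of this section must therefore be arranged so that any witness $q$ produced by $(a)_\alpha$ is never subsequently selected as some $t_{\gamma'}$ to be split, ensuring that $q$ survives as a maximal chain of the completed tree. This compatibility requirement between the various clauses $(a)_\alpha$ is the main bookkeeping burden of the construction and is where the hypothesis $\b=\cont=\omega_2$ will ultimately be invoked.
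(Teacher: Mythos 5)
Your reduction of sequential compactness of $bT$ to the behaviour of increasing $\omega$-chains in $T$ is essentially the content of the first sentence of the paper's proof (via Lemma \ref{branch-conv}), and your explicit extraction of the chain $s_0\subsetneq s_1\subsetneq\cdots$ is a reasonable way to justify that reduction. One small slip there: the remaining branches above $s_k$ need not already disagree at level $o(s_k)$ (they may agree up to some higher, possibly limit, level), so the pigeonhole must be applied at the first level where they actually split; this costs you the clean identity $\pi_q(p_{n_k})=x_{s_k}$ and requires a routine off-by-one repair, but it is not the main problem.

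The genuine gap is in your last paragraph. You correctly isolate the real issue --- $(a)_\alpha$ only delivers a witness $q\in bT_\alpha$, while Lemma \ref{branch-conv} needs a branch of the completed tree --- but your proposed resolution, namely that the construction be arranged so that a witness of $(a)_\alpha$ is \emph{never} later chosen as some $t_{\gamma'}$ to be split, is both not what the paper does and not achievable. The node $t_\alpha$ split at stage $\alpha$ is selected precisely because the sequence $\{y_{s(\alpha)(\omega\cdot n)}:n\in C_\alpha\}$ converges to it, and such sequences occur among the $f_{e(\beta)}$; so witnesses of $(a)$ are split constantly, and refusing to split them would make clause $(b)$ --- the whole point of the example --- impossible. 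What actually happens is: (i) at successor stages the partition $\omega=a_{(t_\alpha)^\frown 0}\cup a_{(t_\alpha)^\frown 1}$ is chosen (via the set $W$ and its properties (i)--(iv)) so that each sequence $f_{e(\beta)}\!\restriction\! A_\beta$ that converged to $t_\alpha$ still converges to one of its two successors, which is how $(a)_{\alpha+1}$ is maintained; and (ii) in the proof of the Claim itself one observes that this convergence is preserved under unions of increasing chains of witnesses, so for fixed $\beta$ the witnesses $q_\alpha\in bT_\alpha$ form a chain whose union is a branch of $bT$ to which $\{x_{f_{e(\beta)}(n)}:n\in A_\beta\}$ converges. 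This limit-preservation step is the actual content of the Claim's proof and is absent from yours. (Also, $\b=\cont$ is invoked for the dominating-function and pseudointersection arguments elsewhere in the construction, not for any ``protection'' of witnesses from splitting.)
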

  \begin{proof}
   By Lemma \ref{branch-conv}, $bT$ is sequentially compact if and only if for every increasing $\omega$-sequence $f$ of elements of $T$, there is a branch $p$ extending $f$ such that some subsequence of $f$ converges to $p$ in $X_p$. Indeed, $f=f_{e(\beta)}$ for some $\beta<\omega_1$, and this implies that $\{x_{f_{e(\beta)}(n)}: n\in A_\beta\}$ converges to some $p\in bT$. To see this, notice that $(a)_\alpha$ is preserved under limits in the following sense: if $\lambda\leq\cont$ is a limit ordinal and $\{q_\alpha:\alpha<\lambda\}\subset T$ are such that $\{x_{f_{e(\beta)}(n)}: n\in A_\beta\}$ converges to $q_\alpha$ in $X_{q_\alpha}$ for all $\alpha<\lambda$, then $\{x_{f_{e(\beta)}(n)}: n\in A_\beta\}$ converges to $q=\bigcup\{q_\alpha:\alpha<\lambda\}$ in $X_q$.
  \end{proof}
  
 Next we would like to add some inductive hypothesis so that at the end of the construction, we obtain a point with no copies of $\omega_1$ converging to it. Our strategy will be to never split the branch $\rho\in bT_{\omega_1}$ so that it remains a branch of $bT$ at the end of the construction, and kill all possible copies of $\omega_1$ converging to it. According to Lemma \ref{branch-omega1}, we can test convergence to $\rho$ just by looking at copies of $\omega_1$ contained in $X_\rho$. Also, all copies of $\omega_1$ contain copies of the ordinal 
 $$\omega\sp{\cdot2}+1=\{\omega\cdot n+m:n,n<\omega\}$$
which is homeomorphic to the one-point compactification of the free union of countably many convergent sequences. 

So the strategy will be to consider all copies of $\omega\sp{\cdot2}+1$ contained in $X_\rho$ and make sure that they are not lifted to copies of $\omega\sp{\cdot2}+1$ in $bT$. Thus, let $\{s(\alpha):\alpha<\cont\}$ be the set of all continuous bijective functions with domain $\omega\sp{\cdot 2}+1$ and image contained in $\omega_1$. We require the following inductive assumption.

\begin{quote}
 $(b)_\alpha$ For every $\omega_1\leq\beta<\alpha$, there are $C_\beta\in[\omega]\sp\omega$, a function $\varphi_\beta\in{}\sp\omega\omega$, and $t\in T_\alpha$ with $t_{s(\omega\sp{\cdot 2})}\subset t$ such that $a_t\in y_{s(\omega\cdot n)}$  and $a_{t\sp\star}\in y_{s(\omega\cdot n+\varphi_\beta(n))}$ for every $n\in C_\beta$.
 \end{quote}  
 
 So assume that we are in step $\alpha<\cont$ of the construction. We need to choose $t_\alpha$, $A_\alpha$, $C_\alpha$, $\varphi_\alpha$ and define the partition $\omega=a_{(t_{\alpha})\sp\frown0}\cup a_{(t_{\alpha})\sp\frown1}$.
 
 First, we explain how to choose $A_\alpha$. Let $q_\emptyset=\bigcup f_{e(\alpha)}$ and $B_\emptyset=\omega$; then it is easy to see that in $X_{q_\emptyset}$, $\{x_{f_{e(\alpha)}(n)}: n\in B_\emptyset\}$ converges to $q_\emptyset$. If there is $q\in bT_{\alpha}$ such that $\{x_{f_{e(\alpha)}(n)}: n\in B_\emptyset\}$ converges to $q$ in $X_{q}$, we define $A_\alpha=\omega$. Otherwise, there exists $q_\emptyset\sp\prime\in T_\alpha$ such that $\{x_{f_{e(\alpha)}(n)}: n\in B_\emptyset\}$ converges to $q_\emptyset\sp\prime$ in $X_{q_\emptyset\sp\prime}$ but for $i\in 2$, $\{x_{f_{e(\alpha)}(n)}: n\in B_\emptyset\}$ does not converge to $(q_\emptyset\sp\prime)\sp\frown i$ in $X_{(q\sp\prime_\emptyset)\sp\frown i}$. Let $q_{i}=(q_\emptyset\sp\prime)\sp\frown i$ for $i\in 2$. Then by the definition of $\T$-algebra, there is a partition $B_\emptyset=\omega=B_0\cup B_1$ such that $\{x_{f_{e(\alpha)}(n)}: n\in B_i\}$ converges to $q_i$ for $i\in 2$. Continuing in this fashion, by recursion on ${}\sp{<\omega}\omega$, we try to construct a sequence of nodes $\{q_s:s\in{}\sp{<\omega}\omega\}\subset T_\alpha$ and a sequence of sets $\{B_s:s\in{}\sp{<\omega}\omega\}\subset[\omega]\sp\omega$ such that $\{x_{f_{e(\alpha)}(n)}: n\in B_s\}$ converges to $q_s$ in $X_{q_s}$. If there is some $s$ such that there is a branch $q\in bT_\alpha$ with $\{x_{f_{e(\alpha)}(n)}: n\in B_s\}$ converging to $q$ in $X_{q}$, we define $A_\alpha=B_s$. Otherwise, given $s\in{}\sp{<\omega}\omega$, we can always choose incompatible $q_{s\sp\frown 0}$, $q_{s\sp\frown 1}$ above $q_s$ and a partition $B_s=B_{s\sp\frown 0}\cup B_{s\sp\frown 1}$ as required. Assume that we never stopped in the construction (otherwise, we are done). For each $\varphi\in{}\sp\omega\omega$, let $q_\varphi=\cup\{q_s:s\subset\varphi\}$ and let $B_\varphi$ be any pseudointersection of $\{B_s:s\subset \varphi\}$; it easily follows that $\{x_{f_{e(\alpha)}(n)}: n\in B_\varphi\}$ converges to $q_\varphi$ in $X_{q_\varphi}$. The set $\{B_\varphi:\varphi\in{}\sp\omega\omega\}$ is of size $\cont$ and $\lvert T_\alpha\rvert<\cont$, so this means that there is $\psi\in{}\sp\omega\omega$ such that $q_\psi\in bT_\alpha$. Define $A_\alpha=B_\psi$ and let $r=q_\psi$. Then $\{x_{f_{e(\alpha)}(n)}: n\in A_\alpha\}$ converges to $r\in bT_\alpha$ in $X_r$.
 
 The next step is to choose $t_\alpha\in bT_\alpha$ and $C_\alpha\in[\omega]\sp\omega$. Consider the copy of $\omega\sp{\cdot 2}$ given by $\{y_{s(\alpha)(\xi)}:\xi\in\omega\sp{\cdot 2}\}$ in $X_\rho$. By an argument similar to the one in the previous paragraph, it is possible to find $C_\alpha\subset\omega$ and some $t_\alpha\in bT_\alpha$ such that $\bigcup s(\alpha)\subset t_\alpha$ and the sequence $\{y_{s(\alpha)(\omega\cdot n)}:n\in C_\alpha\}$ converges to $t_\alpha$ in $X_{\alpha}$.
 
 Recall that according to $(a)_{\alpha+1}$, we need to preserve the convergence of each sequence $f_{e(\beta)}\!\!\restriction_{A_\beta}$ for all $\beta\leq\alpha$. Since we are choosing to split $t_\alpha$, we only need to worry about those sequences with $f_{e(\beta)}[A_\beta]\subset t_\alpha$ (and in fact only those that converge to $t_\alpha$). 
 
 Thus, we have to define $\varphi_\alpha:\omega\to\omega\setminus\{0\}$ in such a way that the sequence $\{y_{s(\alpha)(\omega\cdot n+\varphi_\alpha(n))}:n\in B_\alpha\}$ is almost disjoint with each of $f_{e(\beta)}[A_\beta]$ for all $\beta\leq\alpha$. Consider the set
 $$
 S=\left\{\beta\leq\alpha:\sup (f_{e(\beta)}[A_\beta])= s(\alpha)(\omega\sp{\cdot 2})\right\}
 $$
 
 Let $\beta\leq\alpha$. If $\beta\notin S$, there is nothing to worry about. Otherwise, there exists a function $\psi_\beta:\omega\to\omega$ such that for every $n<\omega$, 
 $$
 \left\{y_{s(\beta)(\omega\cdot n+m)}: \psi_\beta(n)\leq m\right\}\cap\left \{f_{e(\beta)}(n):n\in A_\beta\right\}=\emptyset.
 $$
 
 Consider also the following result the proof of which, being standard, we shall omit.
 
 \begin{lemma}
 Assume that we have a topological space with underlying set $\omega\sp{\cdot 2}+1$ that satisfies the following properties.
 \begin{enumerate}
 \item Every point of the form $\omega\cdot n+m$ with $n<\omega$ and $0<m<\omega$ is isolated.
 \item For each $n<\omega$, $\{\omega\cdot n+m:m<\omega\}$ converges to $\omega\cdot (n+1)$.
 \item $\{\omega\cdot n:n<\omega\}$ converges to $\omega\sp{\cdot 2}$.
 \item $\omega\sp{\cdot 2}$ has character strictly less than $\b$.
 \end{enumerate}
 Then there exists $\varphi:\omega\to\omega\setminus\{0\}$ such that $\{\omega\cdot n+\varphi(n):n<\omega\}$ converges to $\omega\sp{\cdot 2}$.
 \end{lemma}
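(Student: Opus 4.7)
The plan is to read off, from a neighborhood base at $\omega^{\cdot 2}$ of size less than $\b$, a small family of functions in $\omega^\omega$, and then use the defining property of $\b$ to dominate them all by a single $\varphi$. By hypothesis~(4) I may fix a neighborhood base $\{U_\alpha:\alpha<\kappa\}$ at $\omega^{\cdot 2}$ with $\kappa<\b$. For each $\alpha<\kappa$, hypothesis~(3) yields some $N_\alpha<\omega$ with $\omega\cdot(n+1)\in U_\alpha$ whenever $n\geq N_\alpha$. For every such $n$, combining the openness of $U_\alpha$ with hypothesis~(2) produces a least integer $g_\alpha(n)\geq 1$ for which $\omega\cdot n+m\in U_\alpha$ whenever $m\geq g_\alpha(n)$; set $g_\alpha(n)=1$ for $n<N_\alpha$. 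This defines $g_\alpha\in\omega^\omega$.

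The family $\{g_\alpha:\alpha<\kappa\}$ has cardinality at most $\kappa<\b$ and is therefore $\leq^\ast$-bounded. I would then choose $\varphi\in\omega^\omega$ with $\varphi(n)\geq 1$ for every $n$ (replacing $\varphi$ by $\max(\varphi,1)$ if necessary) and $g_\alpha\leq^\ast\varphi$ for every $\alpha<\kappa$. Given $\alpha<\kappa$, pick $M_\alpha\geq N_\alpha$ past which $\varphi(n)\geq g_\alpha(n)$; by the defining property of $g_\alpha$, this forces $\omega\cdot n+\varphi(n)\in U_\alpha$ for every $n\geq M_\alpha$. Since $\{U_\alpha:\alpha<\kappa\}$ is a base at $\omega^{\cdot 2}$, this is exactly what it means for $\{\omega\cdot n+\varphi(n):n<\omega\}$ to converge to $\omega^{\cdot 2}$. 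Note also that $\varphi(n)\geq 1$ together with hypothesis~(1) ensures that each selected point is isolated and in particular distinct from the column limit $\omega\cdot(n+1)$.

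The hard part is essentially nonexistent: this is the textbook diagonal use of $\b$. Hypotheses~(1)--(3) supply the combinatorial data, namely which column limits sit in $U_\alpha$ and how far one must travel up each column to enter $U_\alpha$, while hypothesis~(4) supplies the cardinal inequality that lets the bounding number produce the desired $\varphi$. The only care required is to keep $\varphi$ strictly positive so the chosen points are isolated, which is harmless since it preserves all required $\leq^\ast$-dominations. This is presumably why the authors chose to omit the argument.
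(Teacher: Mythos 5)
Your argument is correct and is exactly the standard one the authors had in mind when they declared the proof omissible: read off the dominating data $g_\alpha$ from an open local base of size $\kappa<\b$ at $\omega^{\cdot 2}$ (using (2) and (3) to make each $g_\alpha$ well defined), and let the definition of $\b$ produce a single $\varphi$ that $\leq^\ast$-dominates them all. There is no gap and nothing to compare against, since the paper gives no proof of its own.
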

 
This implies that we can choose $\varphi_\alpha:C_\alpha\to\omega\setminus\{0\}$ such that $\{y_{s(\alpha)(\omega\cdot n+\varphi_\alpha(n))}:n\in C_\alpha\}$ converges to $t_\alpha$ in $X_{t_\alpha}$. 

By $\b=\omega_2$ we can further assume that for every $\beta\in S$ we have that $\{n\in C_\alpha:\varphi_\alpha(n)\leq\psi_\beta(n)\}$ is finite. Thus, we obtain that  $\{y_{s(\alpha)(\omega\cdot n+\varphi_\alpha(n))}:n\in C_\alpha\}$ is almost disjoint from $f_{e(\beta)}[A_\beta]$ for all $\beta\leq\alpha$.

So all that remains is to define the partition $\omega=a_{(t_{\alpha})\sp\frown0}\cup a_{(t_{\alpha})\sp\frown1}$ in such a way that $(a)_{\alpha+1}$ and $(b)_{\alpha+1}$ hold. For the sake of notational simplicity, let $z_n=y_{s(\alpha)(\omega\cdot n+\varphi_\alpha(n))}$ for all $n\in C_\alpha$. Given $a\in B_{t_\alpha}\setminus u_{t_\alpha}$, we will denote its associated clopen set as
$$
a\sp\ast=\{x_t:t\subset t_\alpha,a\in x_t\}.
$$

For each $n\in C_\alpha$, since $z_n$ is a point of first countability of $X_{t_\alpha}$ so let $\{c(n,m):m<\omega\}\subset B_{t_\alpha}$ define a local open base at $z_n$. 

Assume $\beta<o(t_\alpha)$. The point $x_{t_\alpha\restriction\beta}$ is not a limit point of $\{z_n:n\in C_\alpha\}$ (in $X_{t_\alpha}$). By normality, there are open sets $U_\beta$ and $V_\beta$ with $U_\beta\cap V_\beta=\emptyset$ such that $x_{t_\alpha\restriction\beta}\in U_\beta$ and $\{z_n:n\in C_\alpha\}\subset\sp\ast V_\beta$. Then there exists a function $g_\beta:C_\alpha\to\omega$ such that $\{n\in C_\alpha:c(n,g_\beta(n))\sp\ast\not\subset V_\beta\}$ is finite.

Now, let $\omega_1\leq\beta\leq\alpha$. We know that $\{z_n:n\in C_\alpha\}$ is almost disjoint with $f_{e(\beta)}[A_\beta]$ so we can find a function $h_\beta:C_\alpha\to \omega$ such that $\{n\in C_\alpha:f_{e(\beta)}[A_\beta]\cap c(n,g_\beta(n))\sp\ast\neq\emptyset\}$ is finite. Also, there exists a function $h:C_\alpha\to\omega$ such that for all $n<\omega$, $y_{s(\alpha)(\omega\cdot n)}\notin c(n,h(n))$.

The set of functions
$$
\{g_\beta:\beta<o(t_\alpha)\}\cup\{h_\beta:\omega_1\leq\beta\leq\alpha\}\cup\{h\}
$$
is of size $<\b$ so there exists $g:C_\alpha\to\omega$ that bounds them all mod finite. We obtain an open set $W=\bigcup\{c(n,g(n))\sp\ast:n\in C_\alpha\}$ of $X_{t_\alpha}$ with the following properties:
\begin{enumerate}[label=(\roman*)]
 \item $\{z_n:n\in C_\alpha\}\subset W$,
 \item the only limit point of $W$ in $X_{t_\alpha}$ is $t_\alpha$,
 \item $\{y_{s(\alpha)(\omega\cdot n)}:n\in C_\alpha\}\cap W=\emptyset$, and
 \item for every $\omega_1\leq\beta<\omega_1$ such that $f_{e(\beta)}[A_\beta]\subset t_\alpha$ then one of the two following conditions holds:
 \begin{enumerate}
 \item $\{x_{f_{e(\beta)}(n)}:n<\omega\}\cap W$ is finite, or
 \item $\{x_{f_{e(\beta)}(n)}:n<\omega\}$ does not converge to $t_\alpha$ in $X_{t_\alpha}$ .
 \end{enumerate}
\end{enumerate}

Thus, we define
$$
a_{(t_\alpha)\sp\frown0}=\bigcup\{c(n,g(n)):n\in C_\alpha\},
$$
and $a_{(t_\alpha)\sp\frown1}=\omega\setminus a_{(t_\alpha)\sp\frown0}$. From properties (i) to (iv) above it is easy to see that both $(a)_{\alpha+1}$ and $(b)_{\alpha+1}$ will hold. Thus, we have finished our construction.

  Notice that in our construction, the branches that we split in every step have subsequences of $X_\rho$ converging to them. Thus, we can infer the following.
  
  \begin{claim}\label{rhopreserved}
  The branch $\rho\in bT_{\omega_1}$ is never split, so $\rho\in bT_\alpha$.
  \end{claim}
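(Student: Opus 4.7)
The plan is a transfinite induction on $\alpha$, verifying that $\rho\in bT_\alpha$ for every $\omega_1\leq\alpha\leq\cont$. The base case $\alpha=\omega_1$ is immediate: for each $\beta<\omega_1$ the successor-length initial segment $t_{\beta+1}$ of $\rho$ was added as $(t_\beta)^\frown 0$ at stage $\beta$, while no element of $T_{\omega_1}$ has length exceeding $\omega_1$, so $\rho$ is a maximal branch. Limit steps pose no difficulty: if $\rho\in bT_\beta$ for every $\beta<\alpha$, then every successor-length initial segment of $\rho$ belongs to $T_\alpha=\bigcup_{\beta<\alpha}T_\beta$, and no proper extension of $\rho$ has been introduced. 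All the content is therefore concentrated in the successor step, where I must show that the branch $t_\alpha\in bT_\alpha$ split at stage $\alpha$ satisfies $\rho\not\subseteq t_\alpha$, so that neither $(t_\alpha)^\frown 0$ nor $(t_\alpha)^\frown 1$ coincides with or properly extends $\rho$.

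For this key point, I argue by contradiction and assume $\rho\subseteq t_\alpha$. By construction, $t_\alpha$ was chosen so that the countable sequence $\{y_{s(\alpha)(\omega\cdot n)}:n\in C_\alpha\}$ converges to $t_\alpha$ in $X_{t_\alpha}$. Because $\rho$ is an initial segment of $t_\alpha$, the projection $\pi_\rho^{t_\alpha}\colon X_{t_\alpha}\to X_\rho$ defined at the end of Section~\ref{talgebradef} is continuous, sends $t_\alpha$ to $\rho$, and fixes every $y_\gamma=x_{\rho\restriction \gamma}$ for $\gamma<\omega_1$ (the formula $\pi_\rho^{t_\alpha}(x)=x$ applies because $\rho\restriction\gamma\subseteq\rho$). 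Applying $\pi_\rho^{t_\alpha}$ to the convergent sequence shows that $\{y_{s(\alpha)(\omega\cdot n)}:n\in C_\alpha\}$ converges to $\rho$ in $X_\rho$ as well.

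This, however, contradicts the first Claim of this section, which identifies $X_\rho$ with $\omega_1+1$ and places $\rho$ at the top point $\omega_1$. Since $\omega_1$ has uncountable cofinality, no countable sequence in $X_\rho\setminus\{\rho\}$ can converge to $\rho$. Hence $\rho\not\subseteq t_\alpha$, completing the inductive step and yielding $\rho\in bT_\alpha$ for every $\alpha$. The whole argument reduces to continuity of a single projection combined with the structural fact $X_\rho\cong\omega_1+1$; I foresee no substantive obstacle beyond correctly unpacking the action of $\pi_\rho^{t_\alpha}$ on the points $y_\gamma$.
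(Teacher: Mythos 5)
Your proof is correct and takes essentially the same approach as the paper's one-line justification: the branch split at stage $\alpha$ was chosen to have a countable subset of $X_\rho$ converging to it, whereas no countable subset of $X_\rho\setminus\{\rho\}$ can converge to $\rho$ because $X_\rho\cong\omega_1+1$. Your extra projection argument for the case $\rho\subsetneq t_\alpha$ is harmless but not needed, since the inductive hypothesis that $\rho$ is still a maximal branch of $T_\alpha$ already forces $t_\alpha=\rho$ in that case.
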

  
By inductive hypothesis $(a)_\alpha$ for all $\alpha\in\cont\setminus\omega_1$ we obtain that $bT$ is sequentially compact (Claim \ref{seqcomp}). Since $\cont=\omega_2$ we obtain, as discussed above, that $bT$ is pseudoradial.

Finally, we prove that $bT$ is not strongly pseudoradial. We will prove that this property fails at $\rho\in bT$. If $bT$ were strongly pseudoradial, there would be an infinite cardinal $\kappa$ and an embedding $f:\kappa+1\to T$ such that $f[\kappa]\subset bT\setminus\{\rho\}$ and $f(\kappa)=\rho$. By Claims \ref{tightness} and \ref{rhopreserved}, and by the fact that $\rho$ has character $\omega_1$ in $bT$, we obtain that $\kappa=\omega_1$. By Lemma \ref{branch-omega1}, we may assume that $\pi_\rho\circ f$ is injective. Since $\pi_\rho\circ f$ is continuous, it is an embedding so $(\pi_\rho\circ f)[\omega_1]$ is a copy of $\omega_1$ contained in $X_\rho$. So in particular, there is a copy of $\omega\sp{\cdot 2}+1$ contained in $X_\rho$, we may assume that the first one in our enumeration is $\{y_{s(\gamma)(\xi)}:\xi\in\omega\sp{\cdot 2}+1\}$. According to Lemma \ref{branch-conv}, there is $q\in bT$ such that $y_{s(\gamma)(\omega\sp{\cdot 2})}\subset q$ and $\{y_{s(\gamma)(\xi)}:\xi\in\omega\sp{\cdot 2}\}\cup\{q\}\subset X_q$ is homeomorphic to $\omega\sp{\cdot 2}+1$ . But then, according to $(b)_{\gamma+1}$, it easily follows that the sets $\{y_{s(\omega\cdot n)}:n\in C_\gamma\}$ and $\{y_{s(\omega\cdot n+\varphi_\gamma(n))}:n\in C_\gamma\}$ are separated by clopen sets of $X_q$. This is a contradiction so indeed $bT$ is not strongly pseudoradial. 

\section{Forcing copies of $\omega_1$}\label{section-copies-omega1}

Here we give a generalization of several results (\cite{fremlin}, \cite{ctble-tight-PFA} and \cite{eisworth-perf_preim}) concerning the existence of a proper forcing that forces a copy of $\omega_1$.

Let $X$ be any completely regular countably compact non-compact
space with a base  $\B$ of open sets such that $X\in\B$ and $\emptyset\notin\B$. For a subset
$H$ of $X$, we consider the $\omega$-closure of $H$
$$\cl[\omega]{H} =
\bigcup \{ \overline{a} : a\in [H]^{\leq \aleph_0}\}.$$
Say that $H$ is $\omega$-closed if $\cl[\omega]{H}= H$. 
 
Suppose that $\filt$ is a countably
complete maximal free filter of $\omega$-closed subsets of $X$.
Choose any regular cardinal $\kappa$ such that $X,\B,\filt,\omega_1\in H(\kappa)$. 

\begin{defi} 
For any countable set $M$, we define the trace of the filter $\filt$ as
$$\mbox{Tr}(\filt, M)=\bigcap \{ \overline{F\cap M} : F\in M\cap \filt\}.$$
\end{defi}

We will need the following fact. We refer the reader to \cite{dow-elem_submodels} for the use of elementary submodels in topology.

\begin{lemma}\label{Fplus}
  For any countable elementary submodel  $M\prec H(\kappa)$,
  such that $\filt\in M$,
  then for any subset  $H$ of $X$ that is in $M$, then
  \begin{enumerate}
    \item if $H\cap \mbox{Tr}(\filt, M)$ is not empty,        
        then   $H\in \filt^+$,
    \item if  $H\in \filt^+$ then  $\overline{H\cap M}$ contains $\mbox{Tr}(\filt,M)$.
  \end{enumerate}
\end{lemma}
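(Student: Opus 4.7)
The plan is to rely on two facts: that the $\omega$-closure operation is definable (so $H\in M$ implies $\cl[\omega]{H}\in M$), and that $\filt$ being maximal among filters of $\omega$-closed sets means $\cl[\omega]{H}\notin\filt$ is equivalent to the existence of some $F\in\filt$ disjoint from $\cl[\omega]{H}$. I will read $\filt^+=\{H\subset X:\cl[\omega]{H}\in\filt\}$. I also note that $\cl[\omega]{H}$ is itself $\omega$-closed: given a countable $c\subset\cl[\omega]{H}$, pick for each $y\in c$ a countable $a_y\subset H$ with $y\in\overline{a_y}$, and let $b=\bigcup_{y\in c}a_y$; then $\overline{c}\subset\overline{b}\subset\cl[\omega]{H}$.

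For (1), I would argue contrapositively. Assuming $\cl[\omega]{H}\notin\filt$, the true existential statement ``there exists $F\in\filt$ with $F\cap\cl[\omega]{H}=\emptyset$'' has all its parameters in $M$, so by elementarity it is witnessed inside $M$, yielding some $F\in M\cap\filt$ disjoint from $\cl[\omega]{H}$. Because $F$ is $\omega$-closed and $F\cap M$ is countable, $\overline{F\cap M}\subset\cl[\omega]{F}=F$, so $\mbox{Tr}(\filt,M)\subset\overline{F\cap M}\subset F$, which is disjoint from $H$.

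For (2), given $\cl[\omega]{H}\in\filt$, definability puts $\cl[\omega]{H}$ into $M\cap\filt$, so the very definition of $\mbox{Tr}$ already yields $\mbox{Tr}(\filt,M)\subset\overline{\cl[\omega]{H}\cap M}$. It will then suffice to show $\cl[\omega]{H}\cap M\subset\overline{H\cap M}$. For $y$ in this intersection, the statement ``there exists a countable $a\subset H$ with $y\in\overline{a}$'' holds in $H(\kappa)$ with parameters $y,H\in M$, so by elementarity a witness $a\in M$ can be chosen. Countability of $a$ together with $a\in M$ and $\omega\subset M$ force $a\subset M$, whence $a\subset H\cap M$ and $y\in\overline{a}\subset\overline{H\cap M}$.

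I do not anticipate a genuine obstacle; the work is entirely in correctly identifying the notion of $\filt^+$ and in the two applications of elementarity, once to reflect the disjoint $F$ in (1) and once to reflect the witnessing countable $a\subset H$ in (2). The only subtle moving parts are the stability of $\omega$-closedness under the closure operation and the standard passage from ``$a$ is countable and $a\in M$'' to ``$a\subset M$''.
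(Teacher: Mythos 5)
Your argument is sound and, for part (2), is essentially the paper's own proof: maximality of $\filt$ puts $\cl[\omega]{H}$ into $\filt$, elementarity puts it into $M$, the definition of $\mbox{Tr}(\filt,M)$ gives $\mbox{Tr}(\filt,M)\subset\overline{\cl[\omega]{H}\cap M}$, and reflecting the countable witnessing sets gives $\cl[\omega]{H}\cap M\subset\overline{H\cap M}$. The one point you should be careful about is your reading of $\filt^+$. The paper intends the standard coideal meaning: $H\in\filt^+$ iff $H\cap F\neq\emptyset$ for every $F\in\filt$. This is genuinely different from your condition $\cl[\omega]{H}\in\filt$, which does not imply it: for instance, take $X=\omega_1$ and $\filt$ a maximal countably complete free filter of $\omega$-closed sets containing the (closed) set $\Lambda$ of limit ordinals and all tails; the set $H$ of successor ordinals has $\cl[\omega]{H}=\omega_1\in\filt$ yet misses $\Lambda\in\filt$. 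Under your reading, clause (1) becomes formally weaker and clause (2) formally stronger than the paper's; since the paper only ever uses the composite implication ``$H\cap\mbox{Tr}(\filt,M)\neq\emptyset$ implies $\mbox{Tr}(\filt,M)\subset\overline{H\cap M}$'' (and uses $\filt^+$ consistently when defining the sets $T_j$ in the proof of Theorem \ref{suitable-proper}), nothing downstream breaks, but what you prove is not literally the stated lemma. Note also that under the intended reading, (1) needs no maximality at all: since each $F\in\filt\cap M$ is $\omega$-closed and $F\cap M$ is countable, $\mbox{Tr}(\filt,M)\subset\overline{F\cap M}\subset F$, so $H$ meets every $F\in\filt\cap M$, and elementarity upgrades this to all of $\filt$; your contrapositive detour through $\cl[\omega]{H}$ and maximality is forced on you only by the nonstandard reading.
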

\begin{proof}
  Assume that $H\in M$ and that  $H\cap \mbox{Tr}(\filt,M)\neq
  \emptyset$. Since $\mbox{Tr}(\filt,M) \subset (\bigcap \mathcal  F)\cap M$, it follows that $H\cap F\cap M\neq\emptyset$ for all $F\in \filt\cap M$. By elementarity, it follows that $H\cap F\neq\emptyset$ for all $F\in\filt$ so $H\in\filt\sp+$. By the maximality of $\filt$ and elementarity, $\cl[\omega]{H}\in \filt\cap M$.  Again by elementarity, $\cl[\omega]{H}\cap M$ is contained
   in $\overline{H\cap M}$. This completes the proof.
\end{proof}

We next define a poset $\poset_{X,\B,\filt,\kappa}$.

\begin{defi} A  condition $p\in\poset_{X,\B,\filt,\kappa}$ is a function with domain
  $\mathcal M_p$ such that, for each $M, M_1\in \mathcal M_p$,
  \begin{enumerate}
   \item  $p(M) = \langle x_{p(M)}, \mathcal U_{p(M)},\filt_{p(M)}\rangle$  is an element of $X\times [\B]^{<\aleph_0}\times [\filt]^{<\aleph_0}$,
   \item  $x_{p(M)}\in\textrm{Tr}(\mathcal F, M)$,
\item $\mathcal M_p$ is a finite $\in$-chain of countable elementary submodels of $H(\kappa)$, each containin $\omega_1$, $X$, $\B$, and $\filt$,
  \item if $M\in M_1$, then $p(M)\in M_1$.
  \end{enumerate}
  We define $p\leq q$ provided:
  \begin{enumerate}
    \setcounter{enumi}{4}
  \item $\mathcal M_q\subset \mathcal M_p$,
  \item for each $M\in \mathcal M_q$, $x_{p(M)} = x_{q(M)}$,
     $\filt_{q(M)} = \filt_{p(M)}$, and
    $\mathcal U_{q(M)}\subset \mathcal U_{p(M)}$,
  \item for each $M_1 \in \mathcal M_q$ and $M\in (\mathcal M_p\setminus\mathcal{M}_q)\cap M_1$ such that $\mathcal M_q\cap M_1\in M$,
    then $x_{p(M)}\in U$ for any $U$
    such that $x_{q(M_1)}\in U$ and $U\in \mathcal U_{q(M_2)}$ 
for some $M_2\in \mathcal M_q$ with $M_1\subset M_2$.      
  \end{enumerate}
\end{defi}

We can rephrase the complicated last condition with the help
of the following definitions:
 for $q\in \poset$ and $M\in \mathcal
 M_q$, let $$\mathcal U(q,M) = \mathcal U_{x(q(M))} \cap
 \bigcup \{ \mathcal U_{q(M_1)} : M\subset M_1\in \mathcal M_q\}.$$
Then let $W(q,M) = \bigcap \mathcal U(q,M)$ if $\mathcal U(q,M)\neq\emptyset$ and $W(q,M)=X$ otherwise.  
An alternative way to state the last condition is that
\begin{itemize}
 \item[(7')] if $M\in \mathcal{M}_p\setminus \mathcal M_q$ and if $M_1$ is the $\in$-minimal element of $\mathcal M_q$ containing $M$, then $x_{p(M)}\in W(q,M_1)$.
\end{itemize}
  We can notice that it then follows that
 $W(p,M)\subset W(q,M_1)$. This is a key property to have to ensure
that $\poset_{X,\B,\filt,\kappa}$ is transitive.

We do not include a proof that $\poset_{X,\B,\filt,\kappa}$ is proper because we will give a proof of a stronger statement, Theorem \ref{suitable-proper} below. However, we do prove the following.

\begin{propo}\label{forcing-omega1}
If $G\subset \poset_{X,\B,\filt,\kappa}$ is a generic filter, then\label{cub}:
  \begin{enumerate}
  \item  $\mathcal M_G = \bigcup \{ \mathcal M_p : p\in G\}$ is an
    uncountable $\in$-chain, 
\item   $C_G = \{ M\cap \omega_1 : M\in \mathcal M_G\}$
  is a cub subset of $\omega_1$, and
\item  for each $M\in \mathcal M_G$ such that $M\cap \omega_1 = \delta$
is a limit point of $C_G$,
$\filt\cap M = \bigcup \{ \filt\cap M' : M'\in
\mathcal M_G\cap M\}$.
  \end{enumerate}  
\end{propo}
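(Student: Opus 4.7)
The plan is to establish each of the three conclusions by density arguments in $\poset = \poset_{X,\B,\filt,\kappa}$, using throughout the following \emph{one-model extension}: for any condition $q$ and any countable $M \prec H(\kappa)$ with $\{X, \B, \filt, \omega_1\} \subseteq M$ and containing the finite data needed to slot $M$ into the $\in$-chain of $\mathcal M_q$, one obtains a condition $p \leq q$ adjoining $M$ by setting $p(M) = \langle x, \emptyset, \emptyset\rangle$ for a suitable $x \in \mathrm{Tr}(\filt, M)$. I will first check that the trace is nonempty: enumerating $\filt \cap M$ as a decreasing sequence $(F_n)$ via countable completeness, picking $x_n \in F_n \cap M$, and taking any complete accumulation point $x$ of $\{x_n\}$ in the countably compact space $X$ yields $x \in \bigcap_n \overline{F_n \cap M} = \mathrm{Tr}(\filt, M)$.

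Part (1) and the unboundedness of $C_G$ in (2) come next. The $\in$-chain property passes from conditions to $\mathcal M_G$ because $G$ is a filter: any two $M, M' \in \mathcal M_G$ appear in a common lower bound $q \in G$, whose $\mathcal M_q$ is an $\in$-chain by definition. For uncountability, I would show that $D_\alpha = \{p : \exists M \in \mathcal M_p,\ \alpha \in M\}$ is dense for each $\alpha < \omega_1$ by a one-model extension placing a countable $M \prec H(\kappa)$ containing the given condition and $\alpha$ atop its chain; this simultaneously gives unboundedness of $C_G$ in $\omega_1$.

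For the closure of $C_G$ I will show that $C_G$ coincides with the ground-model club $D = \{M \cap \omega_1 : M \prec H(\kappa) \text{ countable},\ \{X, \B, \filt, \omega_1\} \subseteq M\}$. The inclusion $C_G \subseteq D$ is immediate. For the reverse, fix $\delta \in D$ and use unboundedness to find $q \in G$ together with the $\in$-minimal $M^* \in \mathcal M_q$ having $\delta \in M^*$; every $M' \in \mathcal M_q \cap M^*$ then satisfies $M' \cap \omega_1 < \delta$ (otherwise we are already done). I then argue that the set of extensions of $q$ adjoining some $M \in M^*$ with $M \cap \omega_1 = \delta$ is dense below $q$: since $M^*$ is elementary and contains $\delta$ together with the finitely many $M'$ and $q(M')$ for $M' \in \mathcal M_q \cap M^*$, it produces such a countable $M \in M^*$ containing that finite data, and the one-model extension inserts $M$ between those $M'$ and $M^*$. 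Genericity then forces $\delta \in C_G$.

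Finally, for (3), the $\supseteq$ inclusion is immediate since $M' \in M$ implies $M' \subseteq M$. For $\subseteq$, given $F \in \filt \cap M$, I would argue density below any $q \in G$ with $M \in \mathcal M_q$ of the set of extensions adjoining some $M' \in M$ with $F \in M'$: inside $M$, elementarity produces a countable $M' \prec H(\kappa)$ containing $F$ together with the finite data from $\mathcal M_q \cap M$, which the one-model extension inserts below $M$. Since $\filt \cap M$ is countable, genericity handles each such $F$, and the limit-point hypothesis on $\delta = M \cap \omega_1$ is needed only to guarantee that the resulting $M'$s are cofinal below $M$. The main obstacle, common to the density arguments in both (2) and (3), will be that the new point $x$ in each one-model extension must lie simultaneously in $\mathrm{Tr}(\filt, M)$ and in the neighborhood $W(q, M^*)$ required by condition (7'). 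Combining $x_{q(M^*)} \in W(q, M^*) \cap \mathrm{Tr}(\filt, M^*)$ with Lemma \ref{Fplus} gives $W(q, M^*) \in \filt^+$, and a further application of that lemma, exploiting the maximality and countable completeness of $\filt$ inside $M$, produces the required point in the intersection, completing the density argument.
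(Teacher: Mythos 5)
Your handling of part (1), of the unboundedness of $C_G$, and of the nonemptiness of $\mathrm{Tr}(\filt,M)$ is fine: a model $M$ appended at the \emph{top} of the chain is subject to no clause-(7) constraint, so those one-model extensions go through. The genuine gap is in your proof of closure. You propose to show that $C_G$ equals the ground-model club $D=\{N\cap\omega_1: N\prec H(\kappa)\ \text{countable},\ X,\B,\filt,\omega_1\in N\}$, which requires, for every $\delta\in D$, the density of the set of conditions placing some $M$ with $M\cap\omega_1=\delta$ into the chain. That density fails in general. To insert such an $M$ below the $\in$-minimal $M^*\in\mathcal M_q$ containing $\delta$, the $\in$-chain requirement together with clause (4) forces $M$ to contain, as elements, every $M'\in\mathcal M_q\cap M^*$ \emph{and} every $q(M')$ --- in particular the points $x_{q(M')}\in\mathrm{Tr}(\filt,M')$, which typically lie outside $M'$. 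The set of $N\cap\omega_1$ for countable $N\prec H(\kappa)$ containing this extra data is a club that may well omit $\delta$, so no admissible $M$ with $M\cap\omega_1=\delta$ need exist, and an adversary condition can arrange exactly this. This is precisely why the paper's dense sets $D_{\delta,n}$ are disjunctive: a condition may instead ``seal'' the chain below the first model above $\delta$, which guarantees that $\delta$, while possibly omitted from $C_G$, is not a limit point of it; closure is proved without ever claiming $\delta\in C_G$. Note also that in the one case where the paper does insert a model in the middle, it does not build it from scratch but borrows a model $M_1$ already occurring in some extension $q\leq p_2$, so that $q(M_1)$ automatically satisfies clause (7).

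A second, more local error: to place the new point in $W(q,M^*)\cap\mathrm{Tr}(\filt,M)$ you invoke Lemma \ref{Fplus}(1) to conclude $W(q,M^*)\in\filt^+$. That lemma applies only to sets that are elements of the model, whereas $W(q,M^*)$ is an intersection of open sets drawn from $\mathcal U_{q(M_2)}$ for $M_2\supseteq M^*$, none of which need belong to $M^*$. The correct manoeuvre, used in the proof of Theorem \ref{suitable-proper}, runs the other way: exhibit a set $H\in\filt^+\cap M^*$ of candidate points, apply Lemma \ref{Fplus}(2) to get $\mathrm{Tr}(\filt,M^*)\subseteq\overline{H\cap M^*}$, and use that $W(q,M^*)$ is an open neighbourhood of $x_{q(M^*)}\in\mathrm{Tr}(\filt,M^*)$ to find a point of $H\cap M^*$ inside it. With that repair, your argument for part (3) can be salvaged (there the inserted model is unconstrained except for containing $F$ and the finite data, so a $\filt^+$-large family of candidate pairs $(M',x)$ exists inside $M$), and it would be a genuinely different mechanism from the paper's, which instead records $F(M,n)$ in the coordinate $\filt_{p(M_1)}$ of an existing model and lets clause (4) propagate it into all later models of the generic chain. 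But for part (2) you need the paper's two-case strategy; the equality $C_G=D$ is not available.
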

\begin{proof}
Let us use $\poset$ to denote $ \poset_{X,\B,\filt,\kappa}$. We omit the easy proof that $\mathcal M_G$  and $C_G$ 
are uncountable.   For each countable $M\prec H(\kappa)$ such
that $\filt\in M$, let $\{ F(M,n) : n\in \omega\}$ be an
enumeration of $\filt\cap M$.
\medskip

\noindent Claim 1:   For each $\delta\in\omega_1$ and $n\in\omega$,
 the following set is dense in $\poset$:
 $$
 \begin{array}{lcl}
  D_{\delta,n} & = & \{ p\in \mathcal  P \colon   (\exists M\in \mathcal M_p) \ \textrm{such that either}\\
   & & \left(\delta\subset M\textrm{ and}\  (\forall q\leq p)~\left(\bigcup(M\cap \mathcal M_q)  = \bigcup(M\cap \mathcal M_p)\right)\right)\ \textrm{or}\\
  & & ( M\cap \omega_1 = \delta\textrm{ and}\ (\exists M_1\in\mathcal{M}_p\cap M)\ (F(M,n)\in\filt_{p(M_1)}) )\}.
 \end{array}
 $$

\noindent Informally speaking, the first condition asserts that $\delta$ is not a limit of $C_G$; the second condition asserts that $\delta\in C_G$ and that the element $F(M,n)$ of $M$ will appear in every $M\sp\prime\in\mathcal{M}_G$ with $M_1\in M\sp\prime\in M$.
 \medskip
 
\noindent Proof of Claim 1: Let $p_1\in \poset$ be arbitrary.
It is easy to extend $p_1$ so as to ensure that $\delta\in \bigcup \mathcal M_{p_1}$.  Let $\delta^*$ be the minimum of the set
  $\{ M\cap \omega_1 : (\exists p^* < p_1)~~ M\in \mathcal M_{p^*}  \ \textrm{and}\ \delta\leq M\cap \omega_1\}$. Choose $p_2 \leq p_1$
  such that $\delta^* \in \{ M\cap \omega_1 : M\in \mathcal
  M_{p_2}\}$.

  Suppose first that $\delta^* = \delta$ and let $M$ be such that $\delta^*=M\cap\omega_1$.  If $p_2\notin D_{\delta,n}$   then the first clause in the definition must fail. Therefore we may choose some $q\leq p_2$ such that $\mathcal M_q\cap M$ is not empty  and let $ M_1$ be the maximum element of  $\mathcal M_q\cap M$.    Simply define $q^* $ where the only change from $q$ is 
  that $F(M,n)$ is in $\mathcal F_{q^*(M_1)}$. 
  Since $ M_1$ is the maximum element of $\mathcal M_q\cap M$
  and since $F(M,n)$ is evidently in $M$, it follows that 
$q^*\in  \poset$ and through routine checking that $q^*$ is also
  below $p_2$.   Therefore $q^*$ is in $D_{\delta,n}$.

Now we assume that $\delta < \delta^*$ and again that
 $p_2$ is not in $D_{\delta,n}$. Let
 $M$ be the element of $ \mathcal M_{p_2}$
with $M\cap \omega_1 = \delta^* > \delta$ and note that
(by the failure of the first clause)
there is a $q\leq p_2$ such that $\bigcup (M\cap \mathcal M_q)
\neq \bigcup(M\cap \mathcal M_{p_2})$. Choose such a $q$
and again let 
$M_1 $ be the maximum element of $M\cap \mathcal M_{q}$.
Fix any strictly descending sequence $\{ F_\alpha : \alpha\in
\omega_1\}\subset \mathcal F$ that is an element of $M_1$.
Note that $F_\delta\in M$ since
$\{ F_\alpha : \alpha\in
\omega_1\}\in M$.
We now define an  extension $p$ of $p_2$ that is in
$D_{\delta,n}$.  Set $\mathcal M_p $ equal to
$\mathcal M_{p_2}\cup \{ \strut M_1\}$ and
$p_2\subset p$.  To define $p$ we just have to choose
the value for $p(M_1)$.
We let $p(M_1) = \langle x_{q(M_1)},
\mathcal U_{q( M_1)} , \{ F_\delta\}\rangle$.
Since we already have that $q$ is an extension of $p_2$, it
is routine to check that $p$
is also   an extension of $p_2$.  We check that
$p$ satisfies the first condition of $D_{\delta,n}$.
If
$q<p$ and $M_1\in M'\in \mathcal M_q\cap M$,
then $p(M_1)\in M'$, implying that $\delta\in M'$,
and this contradicts the definition of $\delta^*$.
\medskip

\noindent Claim 2:  If $G\subset \poset$ is a filter that meets
  $D_{\delta,0}$ for all $\delta\in \omega_1$, then
   $C = \{ M\cap \omega_1 : (\exists p\in G) ~ M\in \mathcal M_p\}$ is
  a closed and unbounded subset of $\omega_1$.

  \medskip
  
Proof of Claim 2: For $\delta\in\omega_1$,
the  fact that $G$ meets $D_{\delta,0}$ implies that
 $C\setminus\delta$
is not empty. To show that $C$ is closed
we assume $\delta\notin C$ and show that it 
is not a limit point of $C$
by showing 
that $C\cap \delta$ has a maximum element. 
Choose $p\in G\cap D_{\delta,0}$ and let $M\in \mathcal M_p$
be as in the definition of $D_{\delta,0}$.
Since $\delta$ is  not in $C$, $ \delta \in M\ $.
If $\mathcal M_p\cap M$ is empty, let $\beta = 0$,
otherwise let $\bar M$ be the maximum element of $\mathcal M_p\cap M$,
and let $\beta = \bar M\cap \omega_1$. It now follows that
for all $q\leq p$ in $G$ and $M'\in \mathcal M_q\cap M$,
then $M'\cap \omega_1$ is less than or equal to $\beta$.
It thus  follows that $C$ is disjoint from
 the interval $(\beta,\delta)$.\medskip
 
\noindent Claim 3: If $G$ is a filter that meets $D_{\delta,n}$ for all $\delta\in\omega_1$ and $n\in\omega$, then condition (3) will hold.\medskip

Let $M\in\mathcal M_G$ such that $\delta=M\cap\omega_1$ is a limit. We will prove that each element $F(M,n)\in\filt\cap M$ is in some $M\sp\prime\in\mathcal{M}_G\cap M$. We may choose $p\in G\cap D_{\delta,n}$ such that $M\in\mathcal{M}_p$.

First, we argue that the first clause in $D_{\delta,n}$ does not hold. Since $\delta$ is a limit of $C_G$, $M\cap\mathcal{M}_G$ is infinite. But $M\cap\mathcal{M}_p$ is finite so there must exist $q\in G$ such that $M\cap\mathcal{M}_q\not\subset M\cap\mathcal{M}_p$. Any common extension of $p$ and $q$ contradicts the first clause.

Thus, the second clause holds. Let $M_1\in\mathcal{M}_p\cap M$ such that $F(M,n)\in\filt_{p(M_1)}$. Since $\delta$ is a limit of $C_G$, there exists $M\sp\prime\in\mathcal{M}_G$ be such that $M_1\cap\omega_1<M\sp\prime\cap\omega_1<\delta$. Let $q\in\poset$ with $q\leq p$ and $M\sp\prime\in\mathcal{M}_q$. Then $M_1\in M\sp\prime$ are elements of $\mathcal{M}_q$, by the last condition in the definition of $q\in\poset$ it follows that $q(M_1)\in M\sp\prime$. This implies that $\filt_{p(M_1)}=\filt_{q(M_1)}\in M\sp\prime$ so $F(M,n)\in M\sp\prime$. 
\end{proof}

Now, assume that $G\subset\poset$ is a filter. This implicitly defines a function $$f:C_G\to\{ x_{p(M)}:p\in G,\ M\in \mathcal M_p\}$$ where $M\cap \omega_1\in C_G$  is sent to $x_{p(M)}$. Moreover, if $G$ is generic, then $C_G$ is homoemorphic to $\omega_1$ by (b) in Proposition \ref{forcing-omega1}.

\begin{lemma}\label{homeomorphism}
If $G\subset\poset_{X,\B,\filt,\kappa}$ is a generic filter, then
  $X$ contains a copy of $\omega_1$ with the subspace topology inherited from $\B$.
\end{lemma}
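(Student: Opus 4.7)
The approach is to use the generic filter $G$ to define a map $f:C_G\to X$ by $f(M\cap\omega_1)=x_{p(M)}$ for any $p\in G$ with $M\in\mathcal{M}_p$, and then show $f$ is an embedding; since $C_G$ is a cub in $\omega_1$ by Proposition \ref{forcing-omega1}(2), it is homeomorphic to $\omega_1$ under the order topology, so the image $f[C_G]$ will be the desired copy. Well-definedness of $f$ is immediate from condition (6) via compatibility of conditions in $G$.

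For injectivity, if $M, M'$ are distinct elements of $\mathcal{M}_G$, then the $\in$-chain structure gives (say) $M\in M'$, and condition (4) forces $x_{p(M)}\in M'$, so $\{x_{p(M)}\}\in M'$. Freeness of $\filt$ makes $\{x_{p(M)}\}\notin\filt^+$, and the contrapositive of Lemma \ref{Fplus}(1) applied in $M'$ gives $x_{p(M)}\notin\mathrm{Tr}(\filt,M')$, while $x_{p'(M')}\in\mathrm{Tr}(\filt,M')$ by condition (2). Running the same reasoning with an arbitrary $M_\gamma\in\mathcal{M}_G$ in place of $M'$ for $\gamma>\delta=M\cap\omega_1$ in $C_G$ produces, for each such $\gamma$, an $F\in\filt\cap M_\gamma$ with $x_{p(M)}\notin\overline{F\cap M_\gamma}$; the open set $X\setminus\overline{F\cap M_\gamma}$ then separates $f(\delta)$ from $f(\gamma)$, which is the ingredient needed to show tail-isolation at each limit of $C_G$.

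Continuity of $f$ at a limit $\delta=M\cap\omega_1$ is carried out by a density argument based on condition (7'). For any basic open $U\in\B\cap M$ with $x_{p(M)}\in U$, extending $p$ to add $U$ to $\mathcal{U}_{p(M)}$ is dense, so some such condition is in $G$; condition (7') then forces every later $M^*\in\mathcal{M}_G\cap M$ to satisfy $x_{r(M^*)}\in W(r,M)\subset U$, hence $f(M^*\cap\omega_1)\in U$ eventually as $M^*\cap\omega_1\to\delta^{-}$. Combined with injectivity and tail-isolation above, together with the fact that each initial-segment image $f[[0,\alpha+1]\cap C_G]$ is compact (hence closed in $X$), this shows $f$ is an embedding and therefore $f[C_G]$ is a copy of $\omega_1$ in $X$. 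The main obstacle is the continuity step, namely verifying that the family $\{U\in\B\cap M:x_{p(M)}\in U\}$ traces out enough neighborhoods at $f(\delta)$ for the subspace topology on $f[C_G]$; this is where the countable completeness and maximality of $\filt$, together with Proposition \ref{forcing-omega1}(3), do the heavy lifting through Lemma \ref{Fplus}.
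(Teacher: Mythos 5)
Your skeleton (define $f$ on $C_G$ by $M\cap\omega_1\mapsto x_{p(M)}$, prove continuity by a density argument through condition (7'), conclude that the image is a copy of $\omega_1$) is the paper's, and your injectivity argument via Lemma \ref{Fplus}(1) is correct. However, there are two genuine gaps.

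The first is in the continuity step. You add $U$ to $\mathcal U_{p(M)}$ itself; condition (4) then forces $U$ to be an element of every model of the chain above $M$, which is exactly why you must restrict to $U\in\B\cap M$. But $x_\delta=x_{p(M)}$ lies in $\mbox{Tr}(\filt,M)$ and is typically \emph{not} an element of $M$, so the family $\{U\in\B\cap M:x_\delta\in U\}$ need not be a neighbourhood base at $x_\delta$ and continuity of $f$ into $X$ does not follow. You flag this as ``the main obstacle,'' but the appeal to countable completeness and maximality of $\filt$ is not carried out, and it is not how the issue is resolved. The paper's dense sets $E_{x,U}$ instead place $U$ into $\mathcal U_{p(M')}$ for some $M'\in\mathcal M_p\setminus M$ (one may take the top model of $\mathcal M_p$), where condition (4) imposes no definability constraint on $U$; condition (7') still propagates $U$ down to the models later inserted below $M$, and this works for \emph{every} $U\in\B$ containing $x_\delta$, not just those in $M$.

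The second gap is the passage from ``continuous and injective'' to ``embedding.'' Your ``tail-isolation'' produces, for each $\gamma>\delta$, an open set around $f(\delta)$ missing $f(\gamma)$; it does not produce a single neighbourhood of $f(\delta)$ missing the closure of $\{f(\gamma):\gamma\in C_G,\ \gamma>\delta\}$, and compactness of the initial-segment images cannot supply this. Indeed $f$ itself need not be an embedding: some $f(\delta)$ may be an accumulation point of the image of the tail above $\delta$. The paper concedes this and repairs it with the \v Cech--Stone extension $\beta f:\beta C_G\cong\omega_1+1\to\beta X$: since $f$ is injective, $\beta f$ can fail to be injective only by sending the point $\omega_1$ to $f(\alpha)$ for a (unique) $\alpha\in C_G$, and then $f\restriction(C_G\setminus\alpha)$ is a continuous injection whose extension to a compact space is injective, hence an embedding of a copy of $\omega_1$. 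Some such final step is needed in your argument as well.
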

\begin{proof}
 Given $x\in X$ and $U\in\mathcal{U}_x$, the set
 $$
 E_{x,U}=\{p\in\poset_{X,\B,\filt,\kappa}:\exists M\in\mathcal{M}_p\ (x=x_{p(M)})\Rightarrow\exists M\sp\prime\in\mathcal{M}_p\setminus M\ (U\in\mathcal{U}_{p(M\sp\prime)})\}
 $$
 is easily seen to be dense in $\poset_{X,\B,\filt,\kappa}$.
 
 Now we prove that $f$ is continuous, let $\delta\in C_G$ and $x_\delta = f(\delta)$. It is enough to prove that every time $U\in\B$ with $x_\delta\in U$ there exists $\beta<\delta$ such that $\{x_\alpha:\beta<\alpha\leq\delta\}\subset U$.
 
 This is clearly true when $\delta$ is not a limit, so assume in the following that $\delta$ is a limit. Let $p\in G\cap E_{x,U}$ be such that there is $M\in\mathcal{M}_p$ with $x_\delta=x_{p(M)}$. We may assume that $\mathcal{M}\cap M\neq\emptyset$ and let $M_1=\bigcup(\mathcal{M}\cap M)$. Define $\beta=M_1\cap\omega_1$. If $\beta<\alpha<\delta$, there is $q\in G$ and $M_2\in\mathcal{M}_q$ with $M_2\cap\omega_1=\alpha$. We may assume that $q\leq p$. Since $p\in E_{x,U}$, $U\in\mathcal{U}_{p(M\sp\prime)}$ for some $M\sp\prime\in\mathcal{M}_p\setminus M$. By the last condition in the definition of $q\leq p$, it follows that $x_{q(M_2)}\in U$.
 
 This shows that $f:C_G\to X$ is continuous. Recall that $C_G$ is homeomorphic to $\omega_1$. Even if $f$ is not a homeomorphism, we claim that some restriction of $f$ is an embedding of $\omega_1$ to $X$. Indeed, consider the \v Cech-Stone extension $\beta f:\beta C_G\to\beta X$. The only case in which $\beta f$ is not an embedding is if it is not injective and this can only happen if there is $\alpha\in C_G$ such that $\beta f(\omega_1)=f(\alpha)$. Such an $\alpha$ is unique because $f$ is injective. So $\beta f\restriction{(C_G\setminus\alpha)}=f\restriction{(C_G\setminus\alpha)}$ is an embedding. Since $C_G\setminus\alpha$ is homeomorphic to $\omega_1$, the statement of the lemma follows.
\end{proof}

Now we formulate a strong generalization that encompasses $\mathrm{PFA}$ results such those for first countable spaces (\cite{fremlin}), or spaces of countable tightness (\cite{eisworth-perf_preim}), or even spaces with character at most $\omega_1$(\cite{ctble-tight-PFA}).

\begin{defi} 
Let $X$ be a countably compact space. A function $\varphi$ will be called suitable if the domain of $\varphi$ is the set of all closed subsets of $X$, and for all $B\in\mathrm{dom}(\varphi)$, $\overline{\varphi(B)} = B$. 
\end{defi}

\begin{defi}  
 If $X$ is countably compact, $\varphi$ is a suitable function on $X$, and $\filt$ is a maximal free filter of closed subsets of $X$, then the poset $\poset_{X,\B,\filt,\kappa}^\varphi$ is the
  subposet of $\poset_{X,\B,\filt,\kappa}$ consisting of all   those $p\in\poset_{X,\B,\filt,\kappa}$ satisfying that $p(M)\in \varphi(\mbox{Tr}(\mathcal F, M))$ for all $M\in \mathcal{M}_p$.
\end{defi}

For example, if $X$ is $\omega$-bounded and of countable tightness, then (under PFA) $\varphi$ may be all those points of relative countable character (\cite{eisworth-perf_preim}).

\begin{thm}\label{suitable-proper}
If $\varphi$ is suitable, then $\poset_{X,\B,\filt,\kappa}\sp\varphi$ is proper. 
\end{thm}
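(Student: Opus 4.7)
The plan is to execute the standard properness argument for side-condition posets. Fix a sufficiently large regular $\lambda$, a countable $N\prec H(\lambda)$ containing $\poset^\varphi_{X,\B,\filt,\kappa}$, $X$, $\B$, $\filt$, $\varphi$, and $\kappa$, and any $p\in\poset^\varphi_{X,\B,\filt,\kappa}\cap N$. Set $M:=N\cap H(\kappa)$, which is a countable elementary submodel of $H(\kappa)$ containing $\omega_1$, $X$, $\B$, $\filt$, and $\varphi$.

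The first step is to build a natural generic candidate $q\leq p$ with $M$ placed on top of its side-condition chain. Using that $\filt$ is countably complete and $\filt\cap M$ is countable (together with Lemma~\ref{Fplus} and the maximality of $\filt$), one sees that $\mbox{Tr}(\filt,M)$ lies in $\filt$ and is in particular a non-empty closed subset of $X$. Suitability of $\varphi$ then gives a non-empty dense subset $\varphi(\mbox{Tr}(\filt,M))$ of $\mbox{Tr}(\filt,M)$; pick any $x^{\ast}$ in it. Since $p\in M$, every model in $\mathcal M_p$ lies in $M$, so $\mathcal M_q:=\mathcal M_p\cup\{M\}$ is an $\in$-chain with $M$ on top. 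Setting $q(M'):=p(M')$ on $\mathcal M_p$ and $q(M):=\langle x^{\ast},\emptyset,\emptyset\rangle$ then yields a condition in $\poset^\varphi_{X,\B,\filt,\kappa}$ below $p$.

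To show $q$ is $(N,\poset^\varphi_{X,\B,\filt,\kappa})$-generic, let $D\in N$ be dense open and $r\leq q$ arbitrary; I must find $s\in D\cap N$ compatible with $r$. The idea is to reflect $r$ through $M$: construct a condition $r^{\ast}\in M$ with $\mathcal M_{r^{\ast}}=\mathcal M_r\cap M$ that agrees with $r$ on its point and filter components, and whose neighborhood data $\mathcal U_{r^{\ast}(M')}$ is enriched with finitely many basic open sets from $\B\cap M$ chosen to encode the upper constraints $W(r,M_1)$, where $M_1$ ranges over the elements of $\mathcal M_r$ lying $\in$-above $M'$ and at or above $M$. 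This enrichment is designed precisely so that any $s\leq r^{\ast}$ automatically places its newly introduced points inside every open set demanded by the upper portion of $r$. Since $r^{\ast}\in M\subseteq N$ and $D$ is dense open in $N$, elementarity produces $s\in D\cap N$ with $s\leq r^{\ast}$, and the amalgamation $r\cup s$ defined by $\mathcal M_{r\cup s}:=\mathcal M_r\cup\mathcal M_s$ forms an $\in$-chain because $\mathcal M_s\subseteq M$ fits below $\mathcal M_r\setminus M$.

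The main obstacle will be carrying out this reflection faithfully inside $M$: the open sets used by $r$ at models above $M$ need not themselves lie in $M$, so one must replace them by members of $\B\cap M$ that still separate the relevant points (which is possible because $\B$ is a base and $\B\in M$). Once this is arranged, the critical compatibility condition (7$'$) — that for each $M'\in\mathcal M_s\setminus\mathcal M_r$ the point $x_{s(M')}$ lies in $W(r,M_1)$ for the minimum $M_1\in\mathcal M_r$ containing $M'$ — holds by construction of $r^{\ast}$. The remaining clauses of the definition of $\poset^\varphi_{X,\B,\filt,\kappa}$ follow routinely from $r\leq q$ and $s\leq r^{\ast}$, and the key requirement $x_{(r\cup s)(M')}\in\varphi(\mbox{Tr}(\filt,M'))$ is inherited since $s\in\poset^\varphi_{X,\B,\filt,\kappa}$.
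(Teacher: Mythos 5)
Your overall skeleton (place $M=N\cap H(\kappa)$ on top of the side-condition chain, reflect the condition below $M$, meet $D$ inside $M$, amalgamate) is the standard template that the paper also follows, and the amalgamation bookkeeping is routine. But the step you yourself flag as ``the main obstacle'' is precisely where the proposal breaks down, and the fix you offer does not work. You propose to replace the upper constraint $W(r,M)$ by finitely many members of $\B\cap M$ ``that still separate the relevant points,'' on the grounds that $\B$ is a base and $\B\in M$. The set $W(r,M)$ is a finite intersection of basic open neighborhoods of the point $x_{r(M)}\in\mbox{Tr}(\filt,M)$, and neither this point nor these open sets need belong to $M$; since $\B\cap M$ is countable and the theorem places no character restriction whatsoever on $X$ (that is the whole point of working with an arbitrary suitable $\varphi$), there is in general no $V\in\B\cap M$ with $x_{r(M)}\in V\subset W(r,M)$. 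There is also a structural problem with where you put the enrichment: adding open sets to $\mathcal U_{r^{\ast}(M')}$ for $M'\in\mathcal M_r\cap M$ only constrains those new models of $s$ that lie \emph{inside} some such $M'$, because the last clause of the order imposes nothing on models that $s$ adds above all of $\mathcal M_{r^{\ast}}$ --- and those are exactly the ones whose points must land in $W(r,M)$ for $r\cup s$ to extend $r$.

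The paper's proof supplies the missing idea, and it is a filter argument, not a base-reflection argument. One cannot pull $W(r,M)$ into $M$, but one can show that the set of candidate point-vectors of conditions in $D\cap M$ is large in the sense of $\filt^{+}$: with $\ell=\lvert\mathcal M_r\setminus M\rvert$, form $T_\ell=\{\langle x^q_i : i<\ell\rangle : q\in D_\ell\}$ and prune it downward, declaring $\vec x\in T_j$ iff the set $H(\vec x,T_{j+1})$ of admissible next coordinates lies in $\filt^{+}$. Lemma \ref{Fplus} then does the real work twice: part (1) shows that the vector coming from $r$ itself survives the pruning, so the tree is nonempty at the root inside $M$; part (2) shows that any $\filt^{+}$ set belonging to $M$ has $\mbox{Tr}(\filt,M)$ inside the closure of its $M$-part, hence meets the \emph{external} open set $W(r,M)$, which is a neighborhood of a point of $\mbox{Tr}(\filt,M)$. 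Iterating $\ell$ times produces a vector in $W(r,M)^\ell\cap M^\ell$ realized by some condition in $D_\ell\cap M$, which is then compatible with $r$. Without this use of $\filt^{+}$ and the trace (or an equivalent device), the genericity claim remains unproved.
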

\begin{proof}
 Let
  $\poset  = \poset_{X,\B,\filt,\kappa}$ and let $\poset\in H(\theta)$ for a regular cardinal $\theta$. Also let $M$ be a countable elementary submodel of $H(\theta)$ such that $\poset\in M$.  Then to prove that $\poset$ is proper,
  it suffices to prove that for any $p\in \poset$ with $M\cap H(\kappa)\in \mathcal M_p$, then $p$ is an $(M,\poset)$-generic condition.  Equivalently,  if  $D \in M$ is any dense open subset of $\poset$, we must show there is an $r\in D\cap M$ that is compatible with $p$.
  
  By extending $p$  we may assume that $p\in D$. Let $\mathcal M_p \setminus M$ be  enumerated in increasing order as $\{ M^p_0,\cdots, M^p_{\ell-1}\}$. Note that $M^p_0 = M\cap H(\kappa)$. An important property of $\poset$ is that $\bar p = p\restriction (\mathcal M_p\cap M)$ is itself an element of $\poset$ and is in $M$.
    
  Say that $q\in \poset$ end-extends $\bar p$ if $q\leq \bar p$ and $\mathcal M_{\bar p}$ is an initial segment of $\mathcal M_q$. Now let $D_\ell$ be the set of $q\in D$ that  end-extend $\bar p$ and satisfy that  $\lvert\mathcal M_q\setminus \mathcal M_{\bar p}\rvert =\ell$. It follows that $D_\ell\in M$. For $q\in D_\ell$, let $\mathcal M_q \setminus \mathcal M_{\bar p}$ be enumerated as $\{ M^q_i  : i<\ell\}$ and let $\{ x^q_i : i<\ell\}$ enumerate  $\{ x_{q(M^q_i)} : i<\ell\}$. We leave  the reader to verify  that if $r\in D_\ell\cap M$ is such that        $\{ x^r_i : i<\ell\}$ is a subset of $W(p,M^p_0)$, then $r$ is compatible with $p$. So, it becomes our task to show there is such an $r\in D_\ell\cap M$.
  
  In order for this proof to work, we have to argue in $M^p_0$ rather than in $M$. To this end, we work with the set $$T_\ell = \{ \vec x^q  = \langle x^q_i : i<\ell\rangle : q \in D_\ell\}.$$ It is immediate that $T_\ell\in M\cap H(\kappa)= M^p_0$. For any $\vec x^q \in T_\ell$ and $0<j<\ell$, let $\vec x^q \restriction j $ denote         $\langle x^q_i : i<j\rangle$ and $\vec x^q\restriction 0 $          is the empty sequence.
  
  Of course $T_\ell\subset X^\ell$, we will recursively define a  sequence $T_j \subset X^j$ for $j<\ell$. For any $j<\ell$ and tuple $\vec x\in T_j$, we let $$H(\vec x, T_{j+1})= \{ y \in X : \vec x^\frown y \in T_{j+1}\}.$$ Then, by recursion,  $$T_j = \{ \vec x \in X^j : H(\vec x, T_{j+1}) \in \mathcal F^+\}.$$ This recursion is definable in $M^p_0$, hence for any $j<\ell$ and $\vec x\in M^p_j$, $H(\vec x, T_{j+1})$ is an element of  $M^p_j$. It recursively follows from Lemma \ref{Fplus} that
  $\vec x^p\restriction j\in T_j$ for each $j<\ell$. This means that the empty sequence is an element
  of $T_0\cap M^p_0$, implying that $H(\emptyset, T_1) \in \mathcal F^+\cap M^p_0$. 
  
  By Lemma \ref{Fplus}, $\mbox{Tr}(\mathcal F,M^p_0)$ is contained in the closure of $H(\emptyset, T_1)\cap M^p_0$. Choose any $x_0\in W(p,M^p_0)\cap H(\emptyset, T_1)\cap M^p_0$. By recursion, suppose  we have chosen $\vec x_j = \langle x_0, \ldots,x_{j-1}\rangle\in T_j\cap M^p_0$ so that, for each $i<j$, $x_i\in W(p,M^p_0)$. At step $j$, there is $x_{j}\in H(\vec x_j , T_{j+1})\cap W(p,M^p_0)\cap M^p_0$ because $H(\vec x_j, T_{j+1})\in \mathcal F^+$.  Once we have chosen $\vec x_\ell\in T_\ell\cap M^p_0$, we choose, by elementarity, $r \in D_\ell\cap M$ such that $\vec x_\ell = {\vec x}^r_\ell$.
  \end{proof}

\begin{thm}\label{general-thm-copy-omega1}
 Assume $\mathrm{PFA}$. Let $X$ be a completely regular, countably compact, non-compact space with the property that every time $Y\subset X$ is separable, $C$ is closed in $X$ and $C\subset Y$ then $C$ has a dense set of points with character al most $\omega_1$ in $C$. Then $X$ contains a copy of $\omega_1$.                                                                                                                                                                                                                                           
\end{thm}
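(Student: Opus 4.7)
The plan is to apply the $\poset^\varphi$ machinery with a $\varphi$ extracted from the character hypothesis. First, since $X$ is countably compact and non-compact, Zorn's lemma provides a maximal free filter $\filt$ of $\omega$-closed subsets of $X$. I would fix a base $\B$ for the topology of $X$ with $X \in \B$, $\emptyset \notin \B$, and a regular cardinal $\kappa$ large enough that $X, \B, \filt, \omega_1 \in H(\kappa)$.

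For each closed $C \subset X$, define $\varphi(C) = \{x \in C : \chi(x, C) \leq \omega_1\}$ when $C$ is contained in some separable subset of $X$, and $\varphi(C) = C$ otherwise; the hypothesis makes $\varphi(C)$ dense in $C$ in the former case. The crucial observation is that for any countable $M \prec H(\kappa)$ with $\filt \in M$, the set $\mathrm{Tr}(\filt, M)$ is closed in $X$ and contained in the separable set $\overline{F \cap M}$ for any $F \in \filt \cap M$, so $\varphi(\mathrm{Tr}(\filt, M))$ is dense in $\mathrm{Tr}(\filt, M)$. Thus $\varphi$ is suitable, and by Theorem \ref{suitable-proper} the poset $\poset^\varphi = \poset_{X,\B,\filt,\kappa}^\varphi$ is proper.

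I would then apply PFA to $\poset^\varphi$. Fix a continuous $\in$-chain $\{N_\alpha : \alpha < \omega_1\}$ of countable elementary submodels of $H(\theta)$ for large $\theta$, each containing the relevant parameters. The family of $\aleph_1$ dense sets to meet consists of: the sets $D_{\delta, n}$ from Claim~1 in the proof of Proposition \ref{forcing-omega1} (whose density arguments go through verbatim in $\poset^\varphi$); and for each $\alpha < \omega_1$, each $x \in N_\alpha \cap X$, and each $U \in N_\alpha \cap \B$, the set $E_{x, U}$ from the proof of Lemma \ref{homeomorphism}. PFA produces a filter $G$ meeting all of these, and the proof of Proposition \ref{forcing-omega1} delivers $C_G$ cub in $\omega_1$, $\mathcal M_G$ uncountable, and property (3). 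Define $f : C_G \to X$ by $f(M \cap \omega_1) = x_{p(M)}$ as before.

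The hard part will be continuity of $f$ at limit points of $C_G$, and this is precisely where the character hypothesis is used to keep the dense-set budget within $\aleph_1$. At such a $\delta$ with associated $M_\delta \in \mathcal M_G$, the point $f(\delta) \in \varphi(\mathrm{Tr}(\filt, M_\delta))$ has character at most $\omega_1$ in $\mathrm{Tr}(\filt, M_\delta)$, so by elementarity one can choose a local base $\{U_\gamma : \gamma < \omega_1\} \subset \bigcup_\alpha (N_\alpha \cap \B)$ at $f(\delta)$ relative to $\mathrm{Tr}(\filt, M_\delta)$. Each $E_{f(\delta), U_\gamma}$ is therefore met by $G$. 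Using property (3) of Proposition \ref{forcing-omega1} together with the regularity of $X$ to control how far $f(\alpha)$ strays from $\mathrm{Tr}(\filt, M_\delta)$ as $\alpha \to \delta$, and applying condition (7) of $\poset$ as in the proof of Lemma \ref{homeomorphism}, one concludes that for every basic neighborhood $V$ of $f(\delta)$ there is $\beta < \delta$ with $f((\beta, \delta) \cap C_G) \subset V$. A cofinal restriction of $f$, as in Lemma \ref{homeomorphism}, then yields the desired continuous embedding of $\omega_1$ into $X$.
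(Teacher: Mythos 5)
Your overall strategy coincides with the paper's: the same suitable $\varphi$ (points of relative character at most $\omega_1$ in $\mathrm{Tr}(\filt,M)$, using that $\mathrm{Tr}(\filt,M)\subset\overline{F\cap M}$ is contained in a separable set), the same poset $\poset^\varphi_{X,\B,\filt,\kappa}$, properness via Theorem \ref{suitable-proper}, the cub machinery of Proposition \ref{forcing-omega1}, and continuity of the induced map $f$ at limits of $C_G$. But there is a genuine gap in your choice of the $\aleph_1$-sized family of dense sets that is supposed to deliver continuity. You propose to meet $E_{x,U}$ only for $x\in N_\alpha\cap X$ and $U\in N_\alpha\cap\B$, where $\{N_\alpha:\alpha<\omega_1\}$ is a chain of countable models fixed before applying PFA. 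The points $x_\delta=x_{p(M)}$ that actually appear in the generic object are chosen by the conditions from $\varphi(\mathrm{Tr}(\filt,M))$ and need not belong to $\bigcup_\alpha N_\alpha$; nor is there any reason a local base at such a point should lie in $\bigcup_\alpha (N_\alpha\cap\B)$. Your appeal to ``elementarity'' here does not produce that: elementarity of the $N_\alpha$ gives you neighborhoods of points that are elements of (or definable from parameters in) the $N_\alpha$, not of an arbitrary point of $X$ handed to you by the generic filter. So the sets $E_{f(\delta),U_\gamma}$ you invoke at the end are simply not in the family you asked PFA to meet, and the continuity argument collapses.

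The paper avoids this by making the dense sets uniform in the condition rather than indexed by points of $X$: for each separable $B$ in the range of $\mathrm{Tr}$ and each $x\in\varphi(B)$ one pre-assigns a relative local base $\{U(B,x,\alpha):\alpha<\omega_1\}$ (this is where the character hypothesis enters), and then for each single $\alpha<\omega_1$ one defines one dense set
$$E_\alpha=\{p:\ \forall M\in\mathcal M_p\ \exists M'\in\mathcal M_p\setminus M\ \ U(\mathrm{Tr}(\filt,M),x_{p(M)},\alpha)\in\mathcal U_{p(M')}\},$$
which quantifies over whatever point the condition has already committed to. This keeps the budget at $\aleph_1$ while covering every generically chosen $x_\delta$. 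Combined with the $D_{\delta,n}$ (which confine the accumulation points of a cofinal tail to $B=\mathrm{Tr}(\filt,M)$, as you correctly anticipate), one gets that $x_\delta$ is the unique accumulation point of the tail, hence continuity. Your proof becomes correct once you replace your point-indexed family $E_{x,U}$ by these uniform sets $E_\alpha$.
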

\begin{proof}
 
Let $\filt$ be any maximal free closed filter. Define the
function $\varphi$ as follows. For any countable $M\prec H(\kappa)$ such that $X,\mathcal F\in M$, $\varphi(\mbox{Tr}(\mathcal F, M))$ is the set of points of $\mbox{Tr}(\mathcal F, M)$ that have relative character at most
$\omega_1$. For any closed set $B\subset X$ not of this form, let $\varphi(B) = B$.  It should be clear that $\varphi\in H(\kappa)$. By the assumption of the theorem, $\varphi$ is a suitable function. Let $\B$ be the set of all non-empty open subsets of $X$.

Let $\poset=\poset^\varphi_{X,\B,\filt,\kappa}$. We want to
identify $\omega_1$-many dense subsets of $\poset$ so that any filter $G$ meeting them is enough to ensure that $C_G$ is a cub and that $f$ is a homeomorphism. The proof in Proposition \ref{cub} shows that there is a family of $\omega_1$-many dense sets that will guarantee that $C_G$ is a cub. As noted in the proof of Lemma \ref{homeomorphism}, we only need $G$ to capture sufficiently many neighborhoods of each $x_\delta$ in order to ensure that $f$ is continuous.

For each separable $B$ in the domain of $\varphi$ and each $x\in \varphi(B)$, let $\{ U(B,x, \alpha) : \alpha\in \omega_1\}\subset \mathcal U_x$ be chosen so that $\{ B\cap U(B,x,\alpha) : \alpha\in\omega_1\}$ is a local base for $x$ in $B$. For each $\alpha \in \omega_1$, let 
$$
E_\alpha =\{ p\in \poset : (\forall M\in \mathcal M_p\ \exists M\sp\prime\in\mathcal{M}_p\setminus M)
~~ U(\mbox{Tr}(\mathcal F, M),x_{p(M)},\alpha) \in \mathcal U_{p(M\sp\prime)}\}.$$
It is easy to see that $E_\alpha$ is a dense subset of $\poset$.

Now we assume  that $G$ is a filter on $\poset$ and that
$G\cap E_\alpha$ and $G\cap D_{\delta,n}$ are not empty for all  $\alpha,\delta\in \omega_1$ and $n\in \omega$. As usual, let $\{ x_\alpha : \alpha \in  C_G\}$ enumerate the image of $C_G$ by the above mentioned generic function $f$.

As discussed in the proof of Lemma \ref{homeomorphism}, it is enough to prove that $f$ is continuous. Let $\delta\in C_G$ be a limit point of $C_G$ and let $I$ be any cofinal sequence of $C_G\cap \delta$. By the definition of $D_{\delta,n}$, it follows that $\{ \beta \in I : x_\beta \in F(M,n)\}$  is a cofinite subset of $I$. Therefore the set of limit points of $\{ x_\beta : \beta \in I\}$
is a subset of $B = \mbox{Tr}(\mathcal F, M)$. In addition,
for each $\alpha\in \omega_1$, since $G\cap E_\alpha\cap D_{\delta,0}$ is not empty, $\{ x_\beta : \beta\in I\}\setminus U(B,x_\delta,\alpha)$ is finite. It then follows that $x_\delta$ is the unique accumulation point of $\{ x_\beta : \beta\in I\}$. 
\end{proof}

We also include the following which shows that in some cases, we can control the point of convergence of the copy of $\omega_1$.

\begin{coro}\label{coro-free-seq}
 Assume $\mathrm{PFA}$. Let $K$ be a compact Hausdorff space, $X\subset K$ be with the hypothesis of Theorem \ref{general-thm-copy-omega1} and assume that $S\in[X]\sp{\omega_1}$ has all its complete accumulation points in $K\setminus X$. Then there is a copy of $\omega_1$ contained in $X$ that converges to some complete accumulation point of $S$.
\end{coro}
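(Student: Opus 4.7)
The plan is to apply Theorem~\ref{general-thm-copy-omega1} to the subspace $Y = \overline{S}^K \cap X$, with a filter $\filt$ chosen so that any copy of $\omega_1$ produced must, when viewed inside $K$, converge to a complete accumulation point of $S$.

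First I verify that $Y$ is a legitimate target for Theorem~\ref{general-thm-copy-omega1}. As a closed subspace of $X$, $Y$ is countably compact; it is non-compact because any complete accumulation point of $S$ in $K$ lies in $\overline{S}^K \setminus X \subseteq K \setminus Y$, so $Y$ is not closed in $K$. The hypothesis on $X$ transfers to $Y$ since closed and separable subsets of $Y$ are closed and separable in $X$. Consider the filter $\mathcal{G} = \{F \subseteq Y : F \text{ closed in } Y,\ |S \setminus F| \leq \omega\}$, which is countably complete and free: for each $y \in Y$, the failure of $y$ to be a complete accumulation point of $S$ provides an open $K$-neighborhood $U$ of $y$ with $|U \cap S| \leq \omega$, so $Y \setminus U \in \mathcal{G}$ avoids $y$. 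Extend $\mathcal{G}$ to a maximal free $\omega$-closed filter $\filt$ on $Y$.

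Applying Theorem~\ref{general-thm-copy-omega1} to $Y$ with this $\filt$ (and the suitable function picking points of relative character at most $\omega_1$) yields, under $\mathrm{PFA}$, a copy $W = \{x_\delta : \delta \in C_G\}$ of $\omega_1$ inside $Y \subseteq X$. Since $K$ is compact Hausdorff, the continuous injection $f : \omega_1 \to K$ with range $W$ extends to a continuous $\bar f : \omega_1 + 1 \to K$ converging to a unique point $p$: by compactness and the finite intersection property, $\bigcap_{\alpha < \omega_1} \overline{f([\alpha,\omega_1))}^K$ is nonempty, while two distinct candidate points $p \neq q$ would admit open neighborhoods $U, V$ with $\overline{U}^K \cap \overline{V}^K = \emptyset$ by regularity; the disjoint cofinal open subsets $f^{-1}(U), f^{-1}(V) \subseteq \omega_1$ would then share a limit ordinal $\gamma$ (obtained by zig-zagging between them), at which continuity forces $f(\gamma) \in \overline{U}^K \cap \overline{V}^K$, a contradiction.

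The main step is to show that $p$ is a complete accumulation point of $S$. Suppose, toward a contradiction, some open $U \ni p$ in $K$ satisfies $|U \cap S| \leq \omega$. Then $F = Y \setminus U$ is closed in $Y$ with $|S \setminus F| = |S \cap U| \leq \omega$, so $F \in \mathcal{G} \subseteq \filt$. By a density argument in the forcing, some $M_{\delta_0} \in \mathcal{M}_G$ has $F \in M_{\delta_0}$; since $\mathcal{M}_G$ is an $\in$-chain of countable elementary submodels (Proposition~\ref{forcing-omega1}), $M_{\delta_0} \in M_\delta$ implies $M_{\delta_0} \subseteq M_\delta$ and hence $F \in M_\delta$ for every $\delta > \delta_0$ in $C_G$. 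For such $\delta$, $x_\delta \in \mathrm{Tr}(\filt, M_\delta) \subseteq \overline{F \cap M_\delta}^Y \subseteq \overline{F}^Y = F$, so $x_\delta \notin U$. But $f$ converges to $p \in U$, giving $x_\delta \in U$ for all sufficiently large $\delta$, a contradiction. Hence $p$ is a complete accumulation point of $S$ in $K$, and by the hypothesis on $S$, $p \in K \setminus X$. Thus $W$ is the required copy of $\omega_1$ in $X$ converging to the complete accumulation point $p$ of $S$.
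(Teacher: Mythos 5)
Your overall strategy is the same as the paper's: run the machinery of Theorem \ref{general-thm-copy-omega1} with a maximal free filter concentrating on the tails of $S$, and then argue that the resulting copy of $\omega_1$ can only accumulate at complete accumulation points of $S$. (The paper does not bother to pass to $Y=\overline{S}^K\cap X$; it simply extends the filter generated by the sets $F_\beta=X\cap\overline{\{x_\alpha:\beta\leq\alpha<\omega_1\}}$ on $X$ itself. Your verification that the copy of $\omega_1$ converges in $K$ to a unique point is a correct and worthwhile detail that the paper leaves implicit.)

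There is, however, one genuine gap, at the sentence ``By a density argument in the forcing, some $M_{\delta_0}\in\mathcal{M}_G$ has $F\in M_{\delta_0}$.'' Under $\mathrm{PFA}$ you only obtain a filter $G$ meeting $\omega_1$-many dense sets that must be specified \emph{before} $G$ is produced. Your set $F=Y\setminus U$ is chosen \emph{after} the fact: it depends on $U$, which depends on the limit point $p$, which depends on $G$. The relevant dense set $\{q\in\poset:\exists M\in\mathcal M_q\ (F\in M)\}$ is indeed dense, but it was not on your list, and nothing in the conditions $D_{\delta,n}$, $E_\alpha$ forces an arbitrary member of $\filt$ to appear in some $M\in\mathcal M_G$ (the trace of $\filt$ on $\bigcup\mathcal M_G$ has size at most $\omega_1$, while $\mathcal G$ need not be). The fix is exactly the paper's device: your filter $\mathcal G$ is generated by the $\omega_1$-many tails $F_\beta$ of a fixed enumeration of $S$, so any $F\in\mathcal G$ contains some $F_\beta$; one pre-specifies the $\omega_1$-many dense sets $D^\beta=\{q\in\poset:\exists M\in\mathcal M_q\ (F_\beta\in\filt_{q(M)})\}$, which force $x_\delta\in F_\beta$ for all sufficiently large $\delta\in C_G$, and then your contradiction goes through with $F_\beta$ in place of $F$. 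With that repair your argument is correct and coincides with the paper's.
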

\begin{proof}
 Let $S=\{x_\alpha:\alpha<\omega_1\}$ be an enumeration. For each $\beta<\omega_1$, let $$F_\beta=X\cap\overline{\{x_\alpha:\beta\leq\alpha<\omega_1\}}.$$
 Let $\filt$ be a maximal filter of closed sets that extends $\{F_\alpha:\alpha<\omega_1\}$ and proceed with the proof of Theorem \ref{general-thm-copy-omega1}.
 
 For each $\alpha<\omega_1$, it is easy to prove that the set
 $$
 D\sp\alpha=\{p\in\poset:\exists M\in\mathcal{M}_p\ (F_\alpha\in\filt_{p(M)})\}
 $$
 is dense in $\poset$. So we may assume that $G\cap D\sp\alpha\neq\emptyset$ for all $\alpha<\omega_1$.
 
 If $p\in G\cap D\sp\alpha$ and $M\in\mathcal{M}_p$ is such that $F_\alpha\in\filt_{p(M)}$, then it follows that for any $\gamma\in C_G$ with $M\cap\omega_1<\gamma$, $x_\gamma\in F_\alpha$. Thus,
 $$
 \bigcap_{\alpha<\omega_1}\{x_{p(M)}:\exists p\in G,\exists M\in\mathcal{M}_p\ (\alpha<M\cap\omega_1)\}\subset\bigcap_{\alpha<\omega_1}{F_\alpha},
 $$
 which implies the statement of this Corollary.
\end{proof}

Recall that according to \v Sapirovski's result \cite[3.20, p.71]{juhasz-cardinal_ten_years}, any compact space of size $<2\sp{\omega_1}$ has a dense set of points of character $\leq \omega_1$. A space is $\omega$-bounded if every countable subset has compact closure. Thus, the following result follows from Theorem \ref{general-thm-copy-omega1}.

\begin{coro} 
$\mathrm{PFA}$ implies that any $\omega$-bounded non-compact space of cardinality at most $\mathfrak c$, contains a copy of $\omega_1$. 
\end{coro}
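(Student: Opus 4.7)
The plan is a direct application of Theorem \ref{general-thm-copy-omega1}, and the task reduces to verifying its hypotheses for an $\omega$-bounded, non-compact (Tychonoff) space $X$ of cardinality at most $\cont$. Since every countable subset of an $\omega$-bounded space has compact closure, $X$ is in particular countably compact; non-compactness and complete regularity are part of the setup.

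The heart of the verification is the separable-closed clause of Theorem \ref{general-thm-copy-omega1}. Suppose $Y\subset X$ is separable, pick a countable dense $D\subset Y$, and let $C\subset Y$ be closed in $X$. Then $\cl[X]{D}$ is compact by $\omega$-boundedness, and $C\subset Y\subset \cl[X]{D}$ is closed in this compact space, so $C$ is itself a compact Hausdorff space of cardinality at most $|X|\leq\cont$. \v Sapirovski's theorem recalled just above then supplies a dense set of points of character at most $\omega_1$ in $C$, which is exactly the local hypothesis demanded by Theorem \ref{general-thm-copy-omega1}.

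Invoking Theorem \ref{general-thm-copy-omega1} now produces a copy of $\omega_1$ inside $X$. There is essentially no obstacle here: all the real work has already been done in Theorem \ref{suitable-proper} and Theorem \ref{general-thm-copy-omega1}, and the corollary is designed to package their hypotheses in the recognisable setting of small-cardinality $\omega$-bounded spaces. The one item warranting a glance is the cardinality input to \v Sapirovski's theorem, but since $C$ is realised as a closed subspace of the separable compact space $\cl[X]{D}$, with $|C|\leq\cont$, it falls within range.
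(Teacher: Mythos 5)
Your argument is correct and is essentially the paper's own proof: the paper merely recalls \v Sapirovski's theorem and the definition of $\omega$-boundedness and declares the corollary to follow from Theorem \ref{general-thm-copy-omega1}, while you spell out the (right) verification that a closed $C\subset Y$ with $Y$ separable lies in the compact closure of a countable dense subset, hence is compact of size at most $\cont$. The one quibble---under $\mathrm{PFA}$ one has $\cont=2^{\omega_1}$, so $|C|\le\cont$ is not literally $<2^{\omega_1}$ and one really needs the form of \v Sapirovski's theorem giving a dense set of points of character $<\omega_2$ in compact spaces of size $<2^{\omega_2}$---is inherited verbatim from the paper's own wording, so it is not a defect of your proposal.
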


\begin{coro}\label{coro-pseudoradial}
$\mathrm{PFA}$ implies that any pseudoradial space non-sequential space with radial character at most $\aleph_1$ contains a copy of $\omega_1$.
\end{coro}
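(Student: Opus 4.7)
The plan is to reduce the corollary to Corollary \ref{coro-free-seq}; throughout, $X$ is assumed compact, pseudoradial, non-sequential, of radial character at most $\omega_1$. First, by non-sequentiality, fix a sequentially closed non-closed set $A\subseteq X$, and apply pseudoradiality to obtain a point $x\in\cl{A}\setminus A$ together with a transfinite sequence of length $\leq\omega_1$ in $A$ converging to $x$. The sequential closedness of $A$, combined with the observation that any convergent transfinite sequence of countable cofinality contains a cofinal countable subsequence converging to the same limit, forces the length to be exactly $\omega_1$. After thinning to injectivity, write this sequence as $\langle x_\alpha:\alpha<\omega_1\rangle$ and set $S=\{x_\alpha:\alpha<\omega_1\}$.

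Second, let $K=\cl{S}$ and $Y=A\cap K$, and verify that $(K,Y,S)$ satisfies the hypotheses of Corollary \ref{coro-free-seq}. The space $K$ is compact, and since every compact pseudoradial Hausdorff space is sequentially compact (applying pseudoradiality to the countable range of any sequence; a Hausdorff-pigeonhole argument rules out uncountable-length convergent sequences with countable range), the sequentially closed subspace $Y$ of $K$ is itself countably compact. The point $x$ belongs to $\cl[K]{Y}\setminus Y$, so $Y$ is non-compact. Moreover, Hausdorffness combined with convergence of $\langle x_\alpha\rangle$ to $x$ forces $x$ to be the unique complete accumulation point of $S$ in $K$: any $y\neq x$ can be separated from $x$ by disjoint open sets, and convergence confines all but countably many $x_\alpha$ to the neighborhood of $x$, so the neighborhood of $y$ meets $S$ in at most countably many points. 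Since $x\in K\setminus Y$, the accumulation hypothesis of Corollary \ref{coro-free-seq} is satisfied.

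Third, the character-density hypothesis of Theorem \ref{general-thm-copy-omega1} is verified as follows: each separable closed $C\subseteq Y$ has compact closure $\cl[K]{C}$ of cardinality at most $\mathfrak{c}$, using the tightness bound $t(X)\leq\omega_1$ for compact pseudoradial spaces of radial character $\leq\omega_1$ together with the $\mathrm{PFA}$ cardinal arithmetic. By \v Sapirovski\u{\i}'s theorem, $\cl[K]{C}$ has a dense set of points of character $\leq\omega_1$, and this transfers to $C$ because relative character does not increase under passage to subspaces. Applying Corollary \ref{coro-free-seq} to $(K,Y,S)$ then produces a copy of $\omega_1$ inside $Y\subseteq X$ converging to $x$, completing the proof.

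The main obstacle is the character-density verification: one needs the tightness bound $t(X)\leq\omega_1$, the resulting cardinality bound $|\cl[K]{C}|\leq\mathfrak{c}$, and the passage from \v Sapirovski\u{\i}'s dense small-character points in the ambient compact closure $\cl[K]{C}$ to a dense set of such points in the subspace $C$.
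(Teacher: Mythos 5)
Your route is genuinely different from the paper's: the paper derives uncountable tightness from non-sequentiality under $\mathrm{PFA}$, extracts a converging free $\omega_1$-sequence, passes to a perfect preimage of $\omega_1$, and uses a $\leq\omega_1$-closed elementary submodel of size $2^{\omega_1}=\omega_2$ to cut this down to an \emph{$\omega$-bounded} space of cardinality at most $\cont$, to which the $\omega$-bounded corollary applies. You instead try to feed $Y=A\cap\cl{S}$ directly into Corollary \ref{coro-free-seq}. Your first two steps are fine (and your extraction of $S$ from a sequentially closed non-closed set is arguably more elementary than the free-sequence argument), but the third step has a genuine gap.

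The gap is in the character-density verification. The hypothesis of Theorem \ref{general-thm-copy-omega1} requires a dense set of points of character at most $\omega_1$ \emph{relative to} the closed set $C$, and in the proof of that theorem the relevant $C$'s are the traces $\mbox{Tr}(\filt,M)$, which are closed in $Y$ but need not be compact: $Y=A\cap K$ is not $\omega$-bounded, since $A$ is only sequentially closed and limits of $\omega_1$-sequences from $A$ can escape $A$ --- the point $x$ itself is such a limit. So $C$ can be a proper dense, non-compact subset of $\cl[K]{C}$. \v Sapirovski\u{\i} then gives a dense set $P$ of small-character points of the compact space $\cl[K]{C}$, and while it is true that relative character does not increase upon passing to the subspace $C$, density does not transfer: $P\cap C$ need not be dense in $C$ and could even be empty, since nothing forces the small-character points of $\cl[K]{C}$ to lie in $C$. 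This is precisely the problem the paper's $\omega$-bounded reduction is designed to eliminate: there the closure of every countable set is compact and contained in the space, so each $C$ is itself compact of size at most $\cont$ and \v Sapirovski\u{\i} applies to $C$ directly. A secondary issue: the bound $\lvert\cl[K]{C}\rvert\leq\cont$ does not follow from $t(K)\leq\omega_1$ together with cardinal arithmetic (tightness says nothing about the size of the closure of a countable set); what gives it is that this closure is the radial closure, obtained by iterating limits of sequences of length at most $\omega_1$, together with $\cont^{\omega_1}=\cont$ --- which is essentially the computation the paper's elementary submodel performs. To repair your argument you would need to replace $Y$ by an $\omega$-bounded subspace containing $S$ with $x$ still outside it, which in effect reproduces the paper's reduction.
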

\begin{proof}
If $K$ is such a space, then $K$ has uncountable tightness. This implies it has a (converging) free  $\omega_1$-sequence.  So $K$ has a subspace $X$ that has a perfect  mapping onto $\omega_1$. With no loss of generality, $X$ has density  equal to $\aleph_1$.  Let $X$ be an element of an elementary  submodel $M$ of $H(\theta)$ (some suitably large $\theta$) such that $M^{\omega_1}\subset M$ and $\lvert M\rvert = 2^{\omega_1} = \omega_2$. Since $K$ is pseudoradial and has radial character $\omega_1$, it follows that $X$ is $\omega$-bounded and is contained in $M$. Now apply the previous Corollary.
\end{proof}

Finally, we prove the result announced in the Introduction.

\begin{proof}[Proof of Theorem \ref{main-positive}]
 Let $p\in X$ and assume that there is a no countable sequence converging to $p$. Since $X$ is almost radial, there is a thin sequence $S=\{x_\alpha:\alpha\in\kappa\}$ converging to $p$. As observed in \cite[Lemma 5.6]{nyikos-handbook}, we may assume that $S$ is free. Since $X$ has radial character $\omega_1$ then $\kappa=\omega_1$. After this, apply the arguments in Corollaries \ref{coro-free-seq} and \ref{coro-pseudoradial} to complete the proof.
\end{proof}

\section{Some final comments and questions}
 
 Let us start this section by making an observation of the proof of Theorem \ref{counterex}. In every step $\omega_1\leq\alpha<\cont$ we can inductively notice that for every $p\in bT$ there is some subset of $\{y_\beta:\beta<\omega_1\}$, below $p$ that has $p$ in its closure. From this, the following follows easily.
  
  \begin{propo}
  For every $\alpha<\omega_1$, let $p_\alpha\in bT$ such that $\pi_\rho(p_\alpha)=y_{\alpha}$. Then the set $\{p_{\alpha+1}:\alpha<\omega_1\}$ is a free $\omega_1$-sequence dense in $bT$. 
  \end{propo}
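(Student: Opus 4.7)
My plan is to establish the two claimed properties separately, leveraging the $T$-algebra structure throughout.

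For the free $\omega_1$-sequence property, I would argue using the clopens determined by the initial segments of $\rho$. The condition $\pi_\rho(p_{\alpha+1})=y_{\alpha+1}$ means $o(\rho\wedge p_{\alpha+1})=\alpha+1$, so $p_{\alpha+1}$ agrees with $\rho$ on all levels $\leq\alpha$ and $p_{\alpha+1}(\alpha+1)=1$. For each $\beta<\omega_1$, I would then consider the clopen $U_\beta=a_{t_{\beta+2}}^\ast$ consisting of branches extending $t_{\beta+2}=\rho\restriction(\beta+2)$. A direct check shows $p_{\alpha+1}\in U_\beta$ iff $\alpha>\beta$ (extending the all-zero sequence $t_{\beta+2}$ forces $0$'s at all levels $\leq\beta+1$, which is incompatible with $p_{\alpha+1}(\alpha+1)=1$ when $\alpha+1\leq\beta+1$). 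Thus $U_\beta$ is a clopen separating $\{p_{\alpha+1}:\alpha\leq\beta\}$ from $\{p_{\alpha+1}:\alpha>\beta\}$, so their closures are disjoint, establishing the free sequence property.

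For density, I would apply Lemma \ref{branch-conv} to reduce the problem to showing, for every $p\in bT$, that $p\in\cl[X_p]{\pi_p[Y]}$, where $Y=\{p_{\alpha+1}:\alpha<\omega_1\}$. At $p=\rho$ this is immediate because $\pi_\rho[Y]=\{y_{\alpha+1}:\alpha<\omega_1\}$ is cofinal in $X_\rho\cong\omega_1+1$. For $p\neq\rho$, the approach is to invoke the inductive observation stated just before the proposition: for every such $p$ some appropriate subset of $\{y_\beta:\beta<\omega_1\}$ has $p$ in its closure (read through $\pi_p$). A crucial rigidity property is that for $\beta<o(\rho\wedge p)$, the value $\pi_p(q)$ is determined by $\beta$ alone---namely $\pi_p(q)=x_{p\restriction\beta}$ for \emph{every} $q\in\pi_\rho^\leftarrow(y_\beta)$, since $p$ and $q$ necessarily diverge at level $\beta$. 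Consequently the restriction $\pi_p[Y]$ below the branching level is insensitive to the arbitrary choice of preimages $p_\alpha$, and the observation transfers uniformly to any $Y$ of the prescribed form.

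The main obstacle is handling the case analysis when $\gamma=o(\rho\wedge p)$ is a limit ordinal (so no $p_\gamma$ with $\gamma=\alpha+1$ lies in $Y$) and $p$ extends substantially above $t_\gamma^\frown 1$. Here one must use the construction's inductive guarantee: at every stage $\omega_1\leq\alpha<\cont$ where a branch was split into $t_\alpha^\frown 0$ and $t_\alpha^\frown 1$, the choice of $a_{(t_\alpha)^\frown 0}$ was arranged to preserve cofinally many $y_{\beta}$'s as witnesses of convergence to the newly split branches. This propagates through the projection maps $\pi_q^p$ described at the end of Section \ref{talgebradef} and ensures that enough successor-indexed $p_{\alpha+1}$'s cluster at each $p$ to conclude $p\in\overline{Y}$.
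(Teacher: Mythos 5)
The paper offers essentially no proof of this proposition: it is stated as an immediate consequence of the preceding remark that, inductively, every $p\in bT$ has some subset of $\{y_\beta:\beta<\omega_1\}$ below it with $p$ in its closure, followed by ``the following follows easily.'' So your attempt must be judged on its own merits. The freeness half is correct and complete, and more explicit than anything in the paper: the clopen set determined by $a_{t_{\beta+1}}$ consists exactly of the branches extending $t_{\beta+1}$, hence contains $p_{\alpha+1}$ precisely when $\alpha\geq\beta$; this separates every initial segment from the corresponding tail. (Use $t_{\beta+1}$ rather than your $t_{\beta+2}$, so that the splits at limit indices $\beta$ are covered as well as the successor ones.)

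The density half has a genuine gap, which your final paragraph acknowledges but does not close. Two concrete problems. First, your rigidity computation shows that for $p\neq\rho$ with $\gamma=o(\rho\wedge p)$, the image $\pi_p[Y]$ is contained in $\{y_{\alpha+1}:\alpha+1<\gamma\}\cup\{x_{p\restriction\gamma}\}$ together with at most one further point, so what you actually need is $p\in\cl[X_p]{\{y_{\alpha+1}:\alpha+1<\gamma\}}$, i.e.\ clustering of the \emph{successor-indexed} points. But the sequences the construction preserves are $\{y_{s(\alpha)(\omega\cdot n)}:n\in C_\alpha\}$ and $\{y_{s(\alpha)(\omega\cdot n+\varphi_\alpha(n))}:n\in C_\alpha\}$, whose indices need not be successor ordinals, and in $X_p$ one cannot freely replace a limit-indexed $y_\lambda$ by nearby successor-indexed ones, because the topology that $X_p$ induces on $\{y_\beta:\beta<\gamma\}$ is strictly finer than the order topology (for instance $x_{p\restriction\gamma}$ is isolated in $X_p$). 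Your appeal to ``the construction's inductive guarantee'' therefore does not yet yield the statement. Second, density as literally stated appears to fail at isolated points of $bT$: the node $(t_0)^\frown 1=\langle 1\rangle$ is never extended (every $t_\alpha$ chosen for splitting extends $\rho\restriction s(\alpha)(\omega^{\cdot 2})$), so $\langle 1\rangle$ is an isolated point of $bT$ with $\pi_\rho(\langle 1\rangle)=y_0$, hence outside $\overline{\{p_{\alpha+1}:\alpha<\omega_1\}}$; the same occurs over any $y_\lambda$ with $\lambda$ limit whose node $(t_\lambda)^\frown 1$ is never split. So the density claim needs either a reformulation (e.g.\ restricting to the non-isolated part of $bT$, or taking all $p_\alpha$) or a substantially more detailed argument than either you or the paper provide.
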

  
Notice also that we only used $\cont=\omega_2$ to show that the space is pseudoradial but $b=\cont$ is enough for the following.
 
 \begin{thm}\label{thm-bc}
 Assume $\b=\cont$. Then there exists a compact, sequentially compact space $X$ and a continuous function $\pi:X\to\omega_1+1$ such that every time $e:\omega_1+1\to X$ is an embedding there exists $\alpha<\omega_1$ such that $(\pi\circ e)[\omega_1+1]\subset\alpha\cup\{\omega_1\}$.
 \end{thm}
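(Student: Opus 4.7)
The plan is to run the same construction as in the proof of Theorem \ref{counterex} essentially unchanged, and then observe two things. First, the hypothesis $\cont=\omega_2$ was used \emph{only} at the very end of that proof, in the appeal to the Juh\'asz--Szentmikl\'ossy theorem to promote sequential compactness to pseudoradiality. The recursive construction itself, including the production of $\varphi_\alpha$ out of the $\psi_\beta$'s and of the bounding function $g$ out of the $g_\beta$, $h_\beta$, and $h$, only requires that at each stage $\alpha<\cont$ the collection of auxiliary functions to be dominated mod finite has cardinality $|\alpha|<\cont$, and this works whenever $\b=\cont$. Thus $\b=\cont$ alone yields a compact, sequentially compact space $X=bT$. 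The continuous map $\pi:X\to\omega_1+1$ is the natural projection $\pi_\rho:bT\to X_\rho$ composed with the homeomorphism $X_\rho\cong\omega_1+1$ of the Claim; since $\rho$ is a branch never extended (Claim \ref{rhopreserved}), $\pi_\rho$ sends $q\in bT\setminus\{\rho\}$ to $x_{\rho\restriction o(q\wedge\rho)}$ and $\rho$ to $\rho$, so in particular $\pi^{-1}(\omega_1)=\{\rho\}$.

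To verify the final clause, I argue by contradiction. Suppose $e:\omega_1+1\to bT$ is an embedding with $(\pi\circ e)[\omega_1+1]\not\subset\alpha\cup\{\omega_1\}$ for every $\alpha<\omega_1$, and write $A=(\pi\circ e)[\omega_1+1]$; then $A\cap\omega_1$ is unbounded in $\omega_1$. Since $A$ is a closed subset of $\omega_1+1$ (image of a compact space under a continuous map), $A$ must contain its supremum $\omega_1$, so there exists a unique $\eta\in\omega_1+1$ with $\pi(e(\eta))=\omega_1$, i.e.\ $e(\eta)=\rho$. If $\eta<\omega_1$ then $(\pi\circ e)(\omega_1)<\omega_1$, and continuity at $\omega_1$ gives $\alpha<\omega_1$ such that $(\pi\circ e)((\alpha,\omega_1])$ is bounded in $\omega_1$; together with the fact that $(\pi\circ e)([0,\alpha])\cap\omega_1$ is countable and hence bounded in $\omega_1$, this would bound $A\cap\omega_1$ in $\omega_1$, a contradiction. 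Therefore $\eta=\omega_1$, so $e(\omega_1)=\rho$.

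From this point onward the argument is exactly the last paragraph of the proof of Theorem \ref{counterex}. Lemma \ref{branch-omega1} supplies a closed unbounded $S\subset\omega_1$ on which $\pi_\rho\circ e$ is injective and increasing, so $(\pi_\rho\circ e)[S]$ is a copy of $\omega_1$ inside $X_\rho$ and in particular contains some copy of $\omega\sp{\cdot 2}+1$, which I may take to be $\{y_{s(\gamma)(\xi)}:\xi\leq\omega\sp{\cdot 2}\}$ for some $\gamma<\cont$ in the enumeration fixed in the construction. By Lemma \ref{branch-conv} this lifts through $e$ to a copy of $\omega\sp{\cdot 2}+1$ in $bT$ converging to some branch $q$ with $y_{s(\gamma)(\omega\sp{\cdot 2})}\subset q$. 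Clause $(b)_{\gamma+1}$ then separates the two $\omega$-sequences $\{y_{s(\gamma)(\omega\cdot n)}:n\in C_\gamma\}$ and $\{y_{s(\gamma)(\omega\cdot n+\varphi_\gamma(n))}:n\in C_\gamma\}$ by clopen sets of $X_q$, contradicting the fact that both must converge to $q$ in $X_q$.

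The only real obstacle is the bookkeeping verification that every cardinal-arithmetic step of the construction of Theorem \ref{counterex} uses only $\b=\cont$ rather than the full $\b=\cont=\omega_2$; beyond that, Theorem \ref{thm-bc} is a mild reinterpretation of the final step of that earlier proof, the new input being that $e(\omega_1)=\rho$ is now \emph{forced} by the cofinality hypothesis together with $\pi^{-1}(\omega_1)=\{\rho\}$, rather than being a direct consequence of strong pseudoradiality.
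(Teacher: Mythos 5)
Your proposal is correct and follows exactly the route the paper intends: the paper states Theorem \ref{thm-bc} as an immediate byproduct of the construction for Theorem \ref{counterex}, observing that $\cont=\omega_2$ was used only for the Juh\'asz--Szentmikl\'ossy step, and that $\pi_\rho$ together with clause $(b)_\gamma$ kills cofinal copies of $\omega_1+1$. Your added verification that a cofinal embedding must send $\omega_1$ to $\rho$ (via $\pi^{-1}(\omega_1)=\{\rho\}$ and continuity at $\omega_1$) is the right way to fill in the detail the paper leaves implicit.
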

 
Assume $\mathrm{PFA}$. Then the counterexample $X$ from Theorem \ref{counterex} can be constructed and Theorem \ref{main-positive} holds. This means that $X$ cannot satisfy the hypothesis of Theorem \ref{main-positive}: $X$ is either of radial character $\cont=\omega_2$ or $X$ is not almost radial. However, we don't know which one of the two conditions. The only thing we know is that because of Corollary \ref{PFA-positive}, $X$ has character $\omega_2$. So besides Question \ref{main-question} we can also ask the following.

\begin{ques}
 Does it follow from $\mathrm{MA}+\cont=\omega_2$ that there is a almost radial compact Hausdorff space that is not strongly pseudoradial?
\end{ques}

At first, when the authors of this paper attempted the proof of Theorem \ref{counterex}, we intended to kill all copies of $\omega_1$. However, we were not able to give this construction. As we can see from the proof, we were able to kill all copies of $\omega_1$ that converge to the distinguished point $\rho$, but there might exist other copies of $\omega_1$ in other branches of the tree $T$. 
 
From these considerations, one may naturally ask whether it is consistent with $\mathrm{MA}$ that there exists a compact, sequentially compact space of uncountable tightness that contains no topological copies of $\omega_1$. 
 
Consider Nyikos' example of a first countable space $X$ that maps onto $\omega_1$ but has no copies of $\omega_1$. The construction of this example can be found in \cite[19.1]{fremlin}. Essentially, Nyikos example has underlying set $\omega_1\times \{0,1\}$ and it can be easily checked that any subspace of the form $(\alpha+1)\times\{0,1\}$, $\alpha<\omega_1$, is a compact metric space open in $X$. Thus, the one-point compactification of $X$ is sequentially compact, has uncountable tightness and has no copies of $\omega_1+1$. Nyikos' example can be constructed from a statement that is a consequence of $\diamondsuit$ and that is preserved under ccc forcings. We conclude the following.

\begin{coro}
 It is consistent with $\mathrm{MA}$ and $\cont$ of arbitrary large size that there is a compact, sequentially compact space of uncountable tightness that contains no topological copies of $\omega_1+1$.
 \end{coro}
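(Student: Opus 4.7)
The plan is to combine Nyikos' $\diamondsuit$-based construction with a standard ccc iteration for $\mathrm{MA}$. We work in a ground model $V$ satisfying the consequence $\Phi$ of $\diamondsuit$ used in \cite[19.1]{fremlin} to build a first countable, locally compact, locally countable, countably compact, non-compact space $N$ on $\omega_1\times\{0,1\}$ that contains no copy of $\omega_1$; in $V$ we form the one-point compactification $Y=N\cup\{\infty\}$. Then iterate ccc posets of length $\kappa$ (for any prescribed regular $\kappa\geq\omega_2$) to obtain a forcing extension $V[G]$ in which $\mathrm{MA}+\cont=\kappa$ holds.

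In $V$, the subspaces $C_\alpha=(\alpha+1)\times\{0,1\}$ are clopen, compact and metrizable, and $N=\bigcup_{\alpha<\omega_1}C_\alpha$. Every countable subset of $N$ lies in some $C_\alpha$ and so has a convergent subsequence there, while a sequence eventually escaping every $C_\alpha$ converges to $\infty$; this makes $Y$ sequentially compact. The set $\{(\alpha,0):\alpha<\omega_1\}$ accumulates only at $\infty$ and no countable subset of it has $\infty$ in its closure (since that closure lies in some $C_\alpha$), giving $Y$ uncountable tightness at $\infty$. An embedding $e\colon\omega_1+1\to Y$ is impossible: if $e(\omega_1)=\infty$, then $e[\omega_1]\subset N$ is already a copy of $\omega_1$ inside $N$; and if $e(\omega_1)\in N$, then since $N$ is open in $Y$, continuity of $e$ produces some $\beta<\omega_1$ with $e[(\beta,\omega_1]]\subset N$, yielding a copy of $\omega_1+1$ in $N$. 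Either alternative contradicts Nyikos' theorem.

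To transfer the conclusion to $V[G]$ we use the fact, highlighted in the paragraph preceding the corollary, that $\Phi$ is preserved by ccc forcings. In $V[G]$ the topology on $N$ generated by its old countable neighborhood bases remains first countable and countably compact, each $C_\alpha$ is still clopen, compact and metrizable (these are absolute properties of a fixed countable set equipped with its original topology), and $\Phi$ continues to hold, so Nyikos' proof that $N$ contains no copy of $\omega_1$ goes through verbatim in $V[G]$. Repeating the verification of the previous paragraph inside $V[G]$ finishes the argument. The main obstacle is to pin down the preservation statement precisely and to check that Nyikos' construction relies only on $\Phi$ rather than on other features of the ground model; both tasks are already carried out in \cite[19.1]{fremlin}, so no substantive additional work remains.
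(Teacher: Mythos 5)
Your proposal follows the paper's argument exactly: the paper's ``proof'' is just the paragraph preceding the corollary, which takes Nyikos' example from \cite[19.1]{fremlin} built from a ccc-preserved consequence of $\diamondsuit$, passes to its one-point compactification, and observes sequential compactness, uncountable tightness at the point at infinity, and the absence of copies of $\omega_1+1$ via the clopen compact metrizable pieces $(\alpha+1)\times\{0,1\}$. You have simply filled in the same verifications in more detail, so the two arguments coincide.
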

 
 Again, by considering the result by Juh\'asz and Szentmikl\'ossy \cite{juhasz-sz-pseudoradial} that pseudoradiality follows from sequential compactness under $\cont=\omega_2$ we obtain the following.
 
 \begin{coro}
  It is consistent with $\mathrm{MA}+\cont=\omega_2$ that there is a compact pseudoradial space of radial character $\omega_1$ that is not strongly pseudoradial.
 \end{coro}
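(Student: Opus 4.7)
My plan is to package this as a direct consequence of the previous corollary together with the Juh\'asz--Szentmikl\'ossy theorem recalled earlier in the paper. I would start from a ground model of $\diamondsuit$ and force $\mathrm{MA}+\cont=\omega_2$ by a standard ccc iteration; the consequence of $\diamondsuit$ driving Nyikos' construction is preserved under ccc forcing, so in the extension the witness $Y$ from the previous corollary -- the one-point compactification of Nyikos' example $X\subseteq\omega_1\times\{0,1\}$ -- still exists as a compact, sequentially compact space of uncountable tightness, of cardinality $\omega_1$, and with no topological copy of $\omega_1+1$.

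The next step is to verify the two properties missing from the previous corollary. Pseudoradiality is automatic: by the Juh\'asz--Szentmikl\'ossy theorem, under $\cont=\omega_2$ every compact sequentially compact Hausdorff space is pseudoradial, so $Y$ is pseudoradial. For the radial character, I observe that $X$ is first countable, and at the point at infinity of $Y$ the complements of the compact open blocks $(\alpha+1)\times\{0,1\}$ for $\alpha<\omega_1$ form a cofinal neighbourhood base of size $\omega_1$. Hence $Y$ has character at most $\omega_1$, so its radial character is at most $\omega_1$; since uncountable tightness forbids sequentiality, the radial character is in fact exactly $\omega_1$.

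Finally I would show that $Y$ is not strongly pseudoradial. Uncountable tightness supplies a non-closed $A\subseteq Y$ admitting no countable sequence from $A$ converging to a point of $\overline{A}\setminus A$. If $Y$ were strongly pseudoradial then, after the usual reduction, this $A$ would be witnessed by an injective continuous map $f\colon\gamma+1\to Y$ with $\gamma$ a regular cardinal, $f[\gamma]\subseteq A$, and $f(\gamma)\notin A$. The absence of countable convergence forces $\gamma\geq\omega_1$, while $|Y|=\omega_1$ pins $\gamma$ to $\omega_1$, so $f[\omega_1+1]$ is a copy of $\omega_1+1$ in $Y$, contradicting the previous corollary. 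The only mildly nontrivial step I anticipate is the character computation at the point at infinity (and the matching observation that the one-point compactification inherits sequential compactness, since any countable sequence either clusters in some compact open block $(\alpha+1)\times\{0,1\}$ or converges to the added point), both of which follow immediately from the explicit structure of Nyikos' example.
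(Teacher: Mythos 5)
Your proposal is correct and follows the paper's own route: the corollary is obtained exactly by applying the Juh\'asz--Szentmikl\'ossy theorem (sequential compactness implies pseudoradiality under $\cont=\omega_2$) to the witness of the preceding corollary, namely the one-point compactification of Nyikos' ccc-indestructible example, and then using uncountable tightness together with the absence of copies of $\omega_1+1$ to refute strong pseudoradiality. You simply make explicit the character/radial-character computation and the case analysis on $\gamma$ that the paper leaves to the reader.
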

 
 Besides our Main Question \ref{main-question}, we may also ask the following. 
 
 \begin{ques}
 Does it follow from $\mathrm{MA}+\cont=\omega_2$ that there exists a pseudoradial space (of radial character $\omega_1$) that contains no topological copies of $\omega_1$?
 \end{ques}
 
 \section*{Acknowledgements}

The research in this paper was partially supported by the 2017 PRODEP grant UAM-PTC-636 awarded by the Mexican Secretariat of Public Education (SEP).


\begin{thebibliography}{99}
 
 \bibitem{ctble-tight-PFA} Balogh, Z.; Dow, A.; Fremlin, D. H.; Nyikos, P. J.; ``Countable tightness and proper forcing.'' Bull. Amer. Math. Soc. (N.S.) 19 (1988), no. 1, 295--298.
 
 \bibitem{brazas-fabel} Brazas, Jeremy; Fabel, Paul; ``Strongly pseudoradial spaces.'' Topology Proc. 46 (2015), 255--276.
 
 \bibitem{dow-elem_submodels} Dow, Alan; ``An introduction to applications of elementary submodels to topology.'' Topology Proc. 13 (1988), no. 1, 17--72. 
 
 
 \bibitem{dow-pichardo} Dow, Alan; Pichardo-Mendoza, Roberto; ``Efimov's problem and Boolean algebras.'' Topology Appl. 160 (2013), no. 17, 2207--2231.
 
 \bibitem{dow-cclosed} Dow, Alan; ``Compact C-closed spaces need not be sequential.'' Acta Math. Hungar. 153 (1) (2017), 1--15.   \verb|https://doi.org/10.1007/s10474-017-0739-x|
 
 
\bibitem{eisworth-perf_preim} Eisworth, Todd; ``On perfect pre-images of $\omega_1$." Topology Appl. 125 (2002), no. 2, 263--278.
 
 \bibitem{eis-nyi} Eisworth, Todd; Nyikos, ``First countable, countably compact spaces and the continuum hypothesis." Trans. Amer. Math. Soc. 357 (2005), no. 11, 4269--4299. 
 
  \bibitem{fremlin} Fremlin, David H.; ``Perfect pre-images of $\omega_1$ and the PFA." Topology Appl. 29 (1988), no. 2, 151--166.
  
  \bibitem{juhasz-cardinal_ten_years} Juh\'asz, Istv\'an; ``Cardinal functions in topology -- ten years later.'' Second edition. Mathematical Centre Tracts, 123. Mathematisch Centrum, Amsterdam, 1980. iv+160 pp. ISBN: 90-6196-196-3
 
 \bibitem{juhasz-sz-pseudoradial} Juh\'asz, Istv\'an; Szentmikl\'ossy, Zoltan; ``Sequential compactness versus pseudo-radiality in compact spaces.'' Topology Appl. 50 (1993), no. 1, 47--53. 
 
 \bibitem{min-gen} Koppelberg, Sabine; ``Minimally generated boolean algebras.'' Order, 5, no. 4 (1989), 393--406.
 
 \bibitem{kosz} Koszmider, Piotr; ``Forcing minimal extensions of Boolean algebras.'' Trans. Amer. Math. Soc. 351, no. 8 (1999), 3073--3117.
 
 \bibitem{nyikos-handbook} Nyikos, Peter J.; ``The theory of nonmetrizable manifolds." Handbook of set-theoretic topology, North-Holland, Amsterdam, 1984, pp. 633--684.
   
 \bibitem{nyikos} Nyikos, Peter J.; ``Convergence in topology.'' in Recent progress in general topology (Prague, 1991), 537--570, North-Holland, Amsterdam, 1992.
 
 \bibitem{shapirovski}  Sapirovski\u{\i}, B. E.; ``The equivalence of sequential compactness and pseudo-radialness in the class of compact T2 spaces, assuming CH.'' in Papers from the Seventh Summer Conference in honor of Mary Ellen Rudin held at the University of Wisconsin, Madison, Wisconsin, June 26--29, 1991.
 
 
 \end{thebibliography}
\end{document}